\newtheorem*{main}{Main Theorem}
\newtheorem*{popo}{Popoviciu's Theorem}
\newtheorem{lem}{Lemma}
\newtheorem{cor}{Corollary}
\newcommand{\Z}{{\Bbb Z}}
\newcommand{\R}{{\Bbb R}}
\newcommand{\eps}{\epsilon}
\newcommand{\om}{\omega}
\newcommand{\G}{\mathcal{G}}
\newcommand{\Ge}{\G_{\eps}}
\newcommand{\Geh}{\hat{\G}_{\eps}}
\newcommand{\PP}{\mathcal{P}}
\newcommand{\ONE}{\mathbf{1}}
\DeclareMathOperator{\vol}{vol}
\DeclareMathOperator{\proj}{Proj}
\DeclareMathOperator{\interior}{int}
\begin{document}

\title{Macdonald's solid-angle sum for real dilations of rational polygons}

\makeatletter
\@namedef{subjclassname@2010}{\textup{2010} Mathematics Subject Classification}
\makeatother
\subjclass[2010]{primary: 52C10, secondary: 52C15, 52C17, 32A27}
\keywords{lattice, sublattice, solid angle, generating function, poisson summation, Fourier transform, polytope, Bernoulli polynomial, face poset}

\author{Quang-Nhat Le}
\address{Department of Mathematics, Brown University, 
Box 1917, 151 Thayer Street, Providence, RI 02912}
\email{quang\_nhat\_le@brown.edu}

\author{Sinai Robins}
\address{Instituto de Matematica e Estatistica, Universidade de S\~ao Paulo, Rua do Matao 1010, 05508-090 S\~ao Paulo, Brazil}
\email{sinai\_robins@brown.edu}

\address{Department of Mathematics, Brown University, 
	Box 1917, 151 Thayer Street, Providence, RI 02912}
\email{sinai\_robins@brown.edu}

\begin{abstract}
The solid-angle sum $A_{\PP}(t)$ of a rational polytope ${\PP}\subset \mathbb R^d$, with $t \in \Z$ was first investigated by I.G. Macdonald.  
Using our Fourier-analytic methods, developed in  \cite{DLR}, we are able to establish an explicit formula for $A_{\PP}(t)$, for any real dilation $t$ and any rational polygon $\PP \subset \mathbb R^2$. Our formulation sheds additional light on previous results, for lattice-point enumerating functions of triangles, which are usually confined to the case of integer dilations.   Our approach differs from that of 
Hardy and Littlewood in $1992$ \cite{HL1}, but offers an alternate point of view for enumerating weighted lattice points in real dilations of real triangles. 
\end{abstract}

\maketitle

\section{Introduction}

In his pioneering papers \cite{Mac1} and \cite{Mac2}, I.G. Macdonald introduced a weighted lattice-point sum of polytopes which resembles the Ehrhart function in many ways but has some useful and elegant additional properties. Given a closed polytope $\PP$ and a real number $t$, Macdonald's solid-angle sum counts weighted lattice points inside the dilation $t\PP := \{ tx : x \in \PP \}$, with weights being the solid angles subtended at each lattice point. In this article, we will restrict ourselves to the case of polygons in $\mathbb R^2$. In this setting, the solid angle at a point $x$, which will be denoted as $\omega_{\PP}(x)$, is defined as follows:
	\begin{align*}
		\om_P(x) := \left \{ 
		\begin{array}{ll}
		1 & \textup{if } x \in {\interior(\mathcal{P}}), \\
		0 & \textup{if } x \notin   {\mathcal{P}}, \\
		1/2 & \textup{if } x \textup{ lies in the interior of an edge of } {\mathcal{P}}, \\
		{\theta_x / 2\pi} & \textup{if } x \textup{ is a vertex of } {\mathcal{P}},
		\end{array}
		\right.
	\end{align*}
where $\theta_x$ is the angle, measured in radians, at a vertex $x$ of ${\mathcal{P}}$.  We note that this quantity can be defined more generally as the solid angle of the tangent cone of $\PP$ at $x$. Now we can define the solid-angle sum of the polygon $\PP$, following Macdonald, by
	$$A_{\mathcal{P}}(t) := \sum_{x\in\Z^2} \om_{t{\PP}}(x).$$   

The solid-angle sum is closely related to Ehrhart's integer-point sum, which is defined as
$$L_{\mathcal{P}}(t) := \sum_{x\in\Z^2} \ONE_{t{\PP}}(x),$$
where $\ONE_{S}(x)$ is the indicator function of the set $S$. The integer-point sum simply counts the number of lattice points inside $t\PP$ without any weights. 

On the surface, the Ehrhart sum seems to have a slightly more natural definition, but the solid-angle sum enjoys nice properties that the Ehrhart sum does not possess.  

First, the solid-angle sum has a strong additive property (also known as a {\it simple valuation}), namely:
	\begin{equation*}\label{AddProp}
		A_{{\mathcal{P}}_1}(t) + A_{{\mathcal{P}}_2}(t) = A_{{\mathcal{P}}_1 \cup {\mathcal{P}}_2}(t),
	\end{equation*}
which is valid for any polytopes ${\mathcal{P}}_1$, ${\mathcal{P}}_2$ whose interiors are disjoint, whereas a similar formula for the integer-point sum has to take into account the common boundary of $\PP_1$ and $\PP_2$.
	
	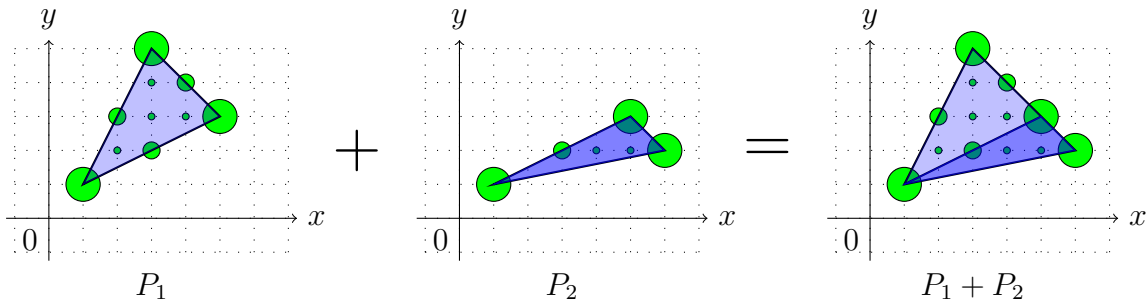
\begin{figure}[!h]
		\centering
		\begin{tikzpicture}[scale=0.45]
			\draw (0,0) node[below left] {$0$};
			\draw[loosely dotted] (-1,-1) grid (7,5);
			\draw[->] (-1.25,0) -- (7.25,0) node[right] {$x$};
			\draw[->] (0,-1.25) -- (0,5.25) node[above] {$y$};
			\draw[fill = green] (3,4) circle (.1cm);
			\draw[fill = green] (4,3) circle (.1cm);
			\draw[fill = green] (3,3) circle (.1cm);
			\draw[fill = green] (3,2) circle (.25cm);
			\draw[fill = green] (2,2) circle (.1cm);
			\draw[fill = green] (2,3) circle (.25cm);
			\draw[fill = green] (4,4) circle (.25cm);
			\draw[fill = green] (5,3) circle (.5cm);
			\draw[fill = green] (3,5) circle (.5cm);
			\draw[fill = green] (1,1) circle (.5cm);
			\draw[thick] (1,1) -- (5,3) -- (3,5) -- cycle;
			\filldraw[nearly transparent, blue] (1,1) -- (5,3) -- (3,5) -- cycle;
			\draw (3,-2) node {$P_1$};
			
			\draw (9,2) node[scale = 2] {$+$};
			
			\draw (0+12,0) node[below left] {$0$};
			\draw[loosely dotted] (-1+12,-1) grid (7+12,5);
			\draw[->] (-1.25+12,0) -- (7.25+12,0) node[right] {$x$};
			\draw[->] (0+12,-1.25) -- (0+12,5.25) node[above] {$y$};
			\draw[fill = green] (5+12,2) circle (.1cm);
			\draw[fill = green] (4+12,2) circle (.1cm);
			\draw[fill = green] (3+12,2) circle (.25cm);
			\draw[fill = green] (5+12,3) circle (.5cm);
			\draw[fill = green] (6+12,2) circle (.5cm);
			\draw[fill = green] (1+12,1) circle (.5cm);
			\draw[thick] (1+12,1) -- (6+12,2) -- (5+12,3) -- cycle;
			\filldraw[semitransparent, blue] (1+12,1) -- (5+12,3) -- (6+12,2) -- cycle;
			\draw (3+12,-2) node {$P_2$};

			\draw (9+12,2) node[scale = 2] {$=$};

			\draw (0+24,0) node[below left] {$0$};
			\draw[loosely dotted] (-1+24,-1) grid (7+24,5);
			\draw[->] (-1.25+24,0) -- (7.25+24,0) node[right] {$x$};
			\draw[->] (0+24,-1.25) -- (0+24,5.25) node[above] {$y$};
			\draw[fill = green] (3+24,4) circle (.1cm);
			\draw[fill = green] (4+24,3) circle (.1cm);
			\draw[fill = green] (3+24,3) circle (.1cm);
			\draw[fill = green] (5+24,2) circle (.1cm);
			\draw[fill = green] (4+24,2) circle (.1cm);
			\draw[fill = green] (3+24,2) circle (.25cm);
			\draw[fill = green] (2+24,2) circle (.1cm);
			\draw[fill = green] (2+24,3) circle (.25cm);
			\draw[fill = green] (4+24,4) circle (.25cm);
			\draw[fill = green] (5+24,3) circle (.5cm);
			\draw[fill = green] (6+24,2) circle (.5cm);
			\draw[fill = green] (3+24,5) circle (.5cm);
			\draw[fill = green] (1+24,1) circle (.5cm);
			\draw[thick] (1+24,1) -- (6+24,2) -- (3+24,5) -- cycle;
			\draw[thick] (1+24,1) -- (5+24,3);
			\filldraw[nearly transparent, blue] (1+24,1) -- (5+24,3) -- (3+24,5) -- cycle;
			\filldraw[semitransparent, blue] (1+24,1) -- (5+24,3) -- (6+24,2) -- cycle;
			\draw (3+24,-2) node {$P_1 + P_2$};
			
		\end{tikzpicture}
		\caption{Additive property of Macdonald's solid-angle sum}
	\end{figure}

Furthermore, Macdonald's solid-angle sum is a better approximation to the continuous volume of $t{\mathcal{P}}$ than Ehrhart's integer-point sum, a claim we can make more precise, as follows.  From the definition of the solid-angle sum, this finite sum associates smaller weights to those lattice points that are contained in the lower-dimensional faces of $\PP$, which offers some initial intuition.  More precisely, when $\PP$ is a lattice polytope and $t$ is an integer, $A_{\PP}(t)$ is an even polynomial (see \cite{Mac1}) with the constant coefficient equal to $0$ and the leading one equal to $\vol(\PP)$. The fact that the codimension-$1$ coefficient of  $A_{\PP}(t)$ vanishes, in addition to the vanishing of half the coefficients of 
 $A_{\PP}(t)$,  indicates that $A_{\PP}(t)$ is a very good approximation to the volume $\vol(t\PP)$, especially for large $t$.  In two dimensions, $A_{\PP}(t)$ is precisely 
 $\vol(t\PP)$, for integer polygons $\PP$,  a fact which is easily equivalent to Pick's theorem.

The solid-angle sum possesses other properties that are shared with the integer-point sum (also known as the Ehrhart polynomial)  $L_{\mathcal{P}}(t)$.  
Ehrhart and Macdonald proved that, for an integer variable $t$,  $A_{\mathcal{P}}(t)$ is a polynomial if $\PP$ is a lattice polytope, and a quasi-polynomial if $\PP$ is a rational polytope; the same is true for the Ehrhart sum.   Moreover, the solid-angle sum also enjoys a reciprocity law which is reminiscent of Ehrhart's Reciprocity Law, both of which were first proved in the general rational polytope case by Macdonald.
More information about these now-classical topics may be found in \cite{Ehr} and \cite{BeR1}, for example. 
	
\begin{main}
		Let $\Delta$ be the triangle with vertices at the origin and the two points $(h,0)$ and $(0,k)$, where $h,k$ are two coprime positive integers. Then, for any nonzero real number $t$, the solid-angle sum of $\Delta$ has the following explicit formula:
		\begin{align*}
			A_{\Delta}(t) &= \frac{hk}{2}t^2-\bar{B}_1(hkt)t + \frac{1}{2hk}\left(\bar{B}_2(hkt)+\frac{h^2+k^2}{6}\right) \\
						&\quad -s(h,k;ht,0)-s(k,h;kt,0) - \frac{\arctan(h/k)}{2\pi}\mathbf{1}_{\Z}(ht) - \frac{\arctan(k/h)}{2\pi}\mathbf{1}_{\Z}(kt),
		\end{align*}
	where $\bar{B}_1(x)$ and $\bar{B}_2(x)$ are the first and second periodic Bernoulli polynomials, $s(h,k;x,y)$ is the Dedekind-Rademacher sum, and $\ONE_{\Z}(x)$ denotes the indicator function of the set of integers $\Z$.
\end{main}
	
\medskip \noindent
We recall here the standard definitions of the Dedekind-Rademacher sums,  for the sake of the reader, and a slightly non-standard definition of the first two Bernoulli polynomials, which we will find very useful due to their compact support.  We define
 the first Bernoulli polynomial by
\begin{equation}
		B_1(x) := \left\{
		\begin{array}{ll}
		x - \frac{1}{2} & \textup{when } x \in (0,1), \\
		0 & \textup{otherwise}.
		\end{array}
		\right.
\end{equation} 
		
The periodized version of $B_1(x)$ is standard, and is defined by $\bar{B}_1(x):= B_1( x-\lfloor x \rfloor)$. It is also known as
 the sawtooth function, or the first periodic Bernoulli polynomial.
We define the second Bernoulli polynomial by 
\begin{equation}
		B_2(x) := \left\{
		\begin{array}{ll}
		 x^2-x+\frac{1}{6} & \textup{when } x \in [0,1], \\
		0 & \textup{otherwise}.
		\end{array}
		\right.
\end{equation}

The periodization of $B_2(x)$, which is also more standard and often called the second periodic Bernoulli polynomial, is defined to be 
$\bar{B}_2(x):= B_2( x-\lfloor x \rfloor)$.   The Dedekind-Rademacher sum is defined by 
\begin{equation}\label{DedekindRademacherSum}
s(h,k;y,x) := \sum_{r \textup{ mod } k} \bar{B}_1\left(h\frac{r+x}{k}+y \right) \bar{B}_1\left(\frac{r+x}{k} \right),
\end{equation}
for any coprime positive integers $h,k$ and any real numbers $x,y$. When $x,y$ are both zero, the 
Dedekind-Rademacher sum reduces to the classic Dedekind sum $s(h,k)$.  The Dedekind-Rademacher sum enjoys a reciprocity law that helps us calculate the sum \eqref{DedekindRademacherSum} in linear time (see \cite{Car}).

We modularize the computations involved in proving the Main Theorem into the next four sections, and the final steps of the proof of the Main Theorem appears in section \ref{Calc3}.

A new method, employing Fourier analysis, was used recently by Diaz, Le, and Robins \cite{DLR} to prove that for an integer polytope ${\mathcal{P}}$, the solid-angle sum $A_{\mathcal{P}}(t)$ takes a polynomial form whose coefficients are periodic functions in the nonzero real variable $t$.  It is easy to see that if we extend the dilation factor $t$ to any nonzero real number, there is no distinction between the case of a lattice polytope $\mathcal{P}$ and  the slightly more general case of a rational polytope $\mathcal{P}$. Thus we may henceforth consider any rational polygon as a dilation of an integer polygon, so that we always conduct our analysis with integer polygons, and their real dilations.

The methodology of \cite{DLR}, although computationally complex in the most general case, turns out to be extremely useful for low-dimensional cases.  The main result above uses this Fourier-analytic machinery to find an {\it explicit formula } for the solid-angle sum $A_{\mathcal{P}}(t)$, for any nonzero  real dilation of an integer polygon in $\R^2$.  Throughout the paper, unless otherwise stated, $t$ is a nonzero real number.
	
	In Section \ref{Breakdown}, we reduce the computation of the solid-angle sum of any integer polygon to the simplest case of a right-angled triangle with a vertex at the origin.  Even this simple case poses considerable difficulties, as is shown in sections \ref{Calc1}, \ref{Calc2} and \ref{Calc3}.    As was shown in \cite{DLR}, $A_{\mathcal{P}}(t)$ is a quasi-polynomial -- a polynomial whose coefficients are periodic functions of $t$.  
We call these coefficients quasi-coefficients. We carry out the detailed calculations of the periodic quasi-coefficients of $A_{\mathcal{P}}(t)$, for the special case of a right triangle, in the three sections \ref{Calc1}, \ref{Calc2} and \ref{Calc3}. The final results involve periodic Bernoulli polynomials and 
Dedekind-Rademacher sums.    Finally, in Section \ref{Apps}, we briefly discuss some classical implications of the explicit formula given by our main result. We also give an analogous formula for the corresponding Ehrhart quasi-polynomial of the right triangle, we point out a connection to the work of Donald Knuth on the Dedekind-Rademacher sums, and we discuss 
further directions for low-dimensional solid-angle sums.

\medskip
{\bf Acknoledgement}.   The second author is grateful for the partial support of FAPESP grant Proc. 2103 / 03447-6, Brazil, and both authors are grateful for the support of ICERM, at Brown University.  The first author would like to express his deepest gratitude to Richard E. Schwartz for his encouragement on this project.  Both authors were stimulated by many interesting conversations with Ricardo Diaz, whom they would like to thank here.

	\section{A specific case}\label{Breakdown}
	
	In this section, we work exclusively with rational polygons.  Suppose that we have obtained an exact formula of $A_{\mathcal{P}}(t)$ when 
	${\mathcal{P}}$ is the right-angled triangle whose vertices include the origin and two points on the coordinate axes. We call this collection of triangles {\bf simple pointed triangles}. If we apply a unimodular transformation $M \in SL(2,\Z)$ to the whole Euclidean plane, we obtain a new triangle $M{\mathcal{P}}$. Clearly, $M$ preserves both the ambient integer lattice $\Z^2$ as well as the face structure of $\PP$. Thus, $M$ maps integer points in the interior of the triangle ${\mathcal{P}}$ bijectively to those in the interior of $M{\mathcal{P}}$ and integer points in the interior of an edge of ${\mathcal{P}}$ bijectively to those in the interior of the corresponding edge of $M{\mathcal{P}}$. Therefore, we can easily compute $A_{M{\mathcal{P}}}(t)$ from $A_{\mathcal{P}}(t)$ by taking care of the normalized angles at the vertices of ${\mathcal{P}}$ and $M{\mathcal{P}}$. 
	
	Hence, our basic case is a triangle which has one of its vertices at the origin, and whose tangent cone at the origin is a unimodular cone. We will call this type of triangle a  {\bf unimodular pointed triangle}.  We remark that the other two tangent cones of a unimodular pointed triangle, located at the vertices which are not the origin, may very well be non-unimodular.

	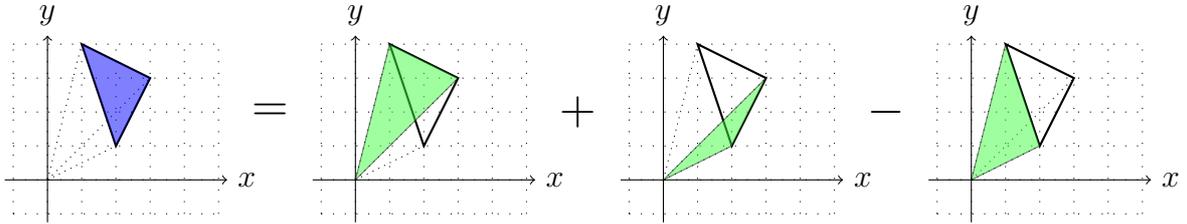
\begin{figure}[!h]\label{fig:Fig1}
		\centering
		\begin{tikzpicture}[scale=0.45]
			\draw[loosely dotted] (-1,-1) grid (5,4);
			\draw[->] (-1.25,0) -- (5.25,0) node[right] {$x$};
			\draw[->] (0,-1.25) -- (0,4.25) node[above] {$y$};
			\draw[thick] (1,4) -- (2,1) -- (3,3) -- cycle;
			\draw[dotted] (0,0) -- (1,4);
			\draw[dotted] (0,0) -- (2,1);
			\draw[dotted] (0,0) -- (3,3);
			\draw[fill = blue, semitransparent] (1,4) -- (2,1) -- (3,3) -- cycle;
			
			\draw (6+0.5,2) node[scale = 1.5] {$=$};
			
			\draw[loosely dotted] (-1+9,-1) grid (5+9,4);
			\draw[->] (-1.25+9,0) -- (5.25+9,0) node[right] {$x$};
			\draw[->] (0+9,-1.25) -- (0+9,4.25) node[above] {$y$};
			\draw[thick] (1+9,4) -- (2+9,1) -- (3+9,3) -- cycle;
			\draw[dotted] (0+9,0) -- (1+9,4);
			\draw[dotted] (0+9,0) -- (2+9,1);
			\draw[dotted] (0+9,0) -- (3+9,3);
			\draw[fill = green!75, semitransparent] (1+9,4) -- (0+9,0) -- (3+9,3) -- cycle;

			\draw (6+9+0.5,2) node[scale = 1.5] {$+$};
			
			\draw[loosely dotted] (-1+18,-1) grid (5+18,4);
			\draw[->] (-1.25+18,0) -- (5.25+18,0) node[right] {$x$};
			\draw[->] (0+18,-1.25) -- (0+18,4.25) node[above] {$y$};
			\draw[thick] (1+18,4) -- (2+18,1) -- (3+18,3) -- cycle;
			\draw[dotted] (0+18,0) -- (1+18,4);
			\draw[dotted] (0+18,0) -- (2+18,1);
			\draw[dotted] (0+18,0) -- (3+18,3);
			\draw[fill = green!75, semitransparent] (0+18,0) -- (2+18,1) -- (3+18,3) -- cycle;

			\draw (6+18+0.5,2) node[scale = 1.5] {$-$};
			
			\draw[loosely dotted] (-1+27,-1) grid (5+27,4);
			\draw[->] (-1.25+27,0) -- (5.25+27,0) node[right] {$x$};
			\draw[->] (0+27,-1.25) -- (0+27,4.25) node[above] {$y$};
			\draw[thick] (1+27,4) -- (2+27,1) -- (3+27,3) -- cycle;
			\draw[dotted] (0+27,0) -- (1+27,4);
			\draw[dotted] (0+27,0) -- (2+27,1);
			\draw[dotted] (0+27,0) -- (3+27,3);
			\draw[fill = green!75, semitransparent] (1+27,4) -- (2+27,1) -- (0+27,0) -- cycle;
			
		\end{tikzpicture}
		\caption{A signed decomposition of a general triangle into pointed triangles}
	\end{figure}
	
	Next, we utilize a result of Barvinok \cite{BaP}, stating that we can decompose an arbitrary rational cone into unimodular cones in polynomial time. Therefore, using Barvinok's algorithm, we can compute in polynomial time the solid-angle sum of any {\bf pointed triangle}, which is defined to be a triangle that has the origin as a vertex.  Finally, we can solve the most general case where ${\mathcal{P}}$ is any rational polygon.  Due to the additive property (\ref{AddProp}), it is easily seen that the solid-angle sum $A_{\mathcal{P}}(t)$ is a signed sum of $A_{\mathcal{P}_k}(t)$ where $\mathcal{P}_k$ are pointed triangles each of which includes two consecutive vertices of ${\mathcal{P}}$ as their own vertices. Therefore, it is clear that, for any integer polygon $\mathcal{P}$, we can calculate the univariate function $A_{\mathcal{P}}(t)$ in polynomial time.  
	
	Summarizing the discussion above, we easily see that we can decompose each rational polygon $\mathcal{P}$ into polynomially many unimodular pointed triangles.  We call this decomposition the {\bf unimodular decomposition} of $\mathcal{P}$.
	
	Because of the above reduction arguments, we only need to focus on simple pointed triangles. Moreover, as noted in the Introduction, it is sufficient to consider only \textit{lattice} simple pointed triangle. Throughout the paper, we assume that $\Delta$ is the simple pointed triangle with three vertices $V_1(0,0)$, $V_2(h,0)$ and $V_3(0,k)$, where $h,k$ are coprime positive integers. We also use $E_1,E_2,E_3$ to denote
	the edges of $\Delta$.  Figure 3 below will help illustrate the above notations.
	
	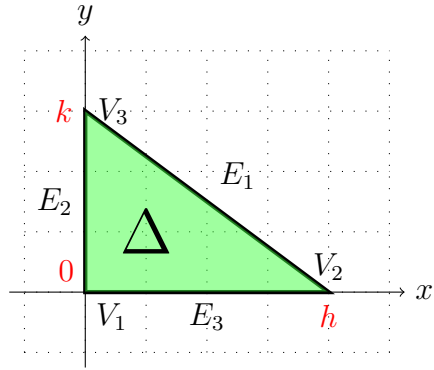
\begin{figure}[!h]\label{righttrianglefigure}
		\centering
		\begin{tikzpicture}[scale=0.8]
			\draw (0,0) node[red,above left] {$0$} node[below right] {$V_1$};
			\draw[loosely dotted] (-1,-1) grid (5,4);
			\draw[->] (-1.25,0) -- (5.25,0) node[right] {$x$};
			\draw[->] (0,-1.25) -- (0,4.25) node[above] {$y$};
			\draw[ultra thick] (0,0) -- node[left] {$E_2$} (0,3) node[red,left] {$k$} node[right] {$V_3$} -- node[above right] {$E_1$} (4,0) node[red,below] {$h$} node[above] {$V_2$} -- cycle; 
			\draw[fill = green!75, semitransparent] (0,3) -- (4,0) -- (0,0);
			\draw (4,0) -- node[below] {$E_3$} (0,0);   
			\draw (1,1) node[scale = 2] {$\Delta$};
		\end{tikzpicture}
		\caption{The simple pointed triangle $\Delta$}
	\end{figure}
	
	Following the notation of \cite{DLR}, we first construct the face poset 
	$\mathbf{G}_{\Delta}$ of the triangle ${\Delta}$, which can also be considered as a directed graph. We first briefly recall the terminology used in \cite{DLR}, keeping in mind that the theory developed there arose naturally by first using Stokes' formula to rewrite the Fourier-Laplace transform of the indicator function of a polytope as a finite sum of weighted Fourier-Laplace transforms of its facets, and then iterating this procedure on each of its facets.

	In the graph $\mathbf{G}_{\Delta}$, each node represents a face of $\Delta$, and each arc
	between two nodes represents the inclusion of one face of $\Delta$ in a larger face of $\Delta$.    Moreover, for each arc $(F,G)$ with $G \subset F$  in the graph 
	$\mathbf{G}_{\Delta}$, we assign a weight, namely the function
		$$W_{(F,G)}(\xi)=\frac{-1}{2\pi i} \frac{\langle \proj_F(\xi), N_F(G) \rangle }{\|\proj_F(\xi)\|^2},$$
	where
	\begin{itemize}
		\item[$\bullet$] $\proj_F(\xi)$ denotes the projection of the vector $\xi$ on the affine space spanned by the face $F$. The result is a vector whose two endpoints are the corresponding projections of two endpoints of the vector $\xi$.
		\item[$\bullet$] $N_F(G)$ is the (unique) outward-pointing unit normal vector of $G$ which resides in the affine space spanned by $F$.
		\item[$\bullet$] $\langle \cdot,\cdot \rangle$ and $\|\cdot\|$ denote the standard inner product and the standard Euclidean norm on $\R^2$.
	\end{itemize}

	\begin{figure}[!h]
		\centering
		\begin{tikzpicture}[scale = 0.75]
			\draw (4,5+1) -- (4,5+9/4+1) -- (4+3,5+1) -- cycle;
			\draw (4,5+1) node {$\times$};
			\draw (4,5+9/4+1) node {$\circ$};
			\draw (4+3,5+1) node {$\ast$};
			\draw (4+0.75,5+0.75+1) node {$\Delta$};
			
			\draw[->,dashed] (4.25,4.75+0.5) -- (2.5-0.25,3.5+0.25);
			\draw (1,1) -- node[left] {$E_2$} (1,1+9/4);
			\draw (1,1) node {$\times$};
			\draw (1,1+9/4) node {$\circ$};
			
			\draw[->,dashed] (5.5,4.75+0.5) -- (5.5,2.25);
			\draw (4,1) -- node[above] {$E_3$} (4+3,1);
			\draw (4,1) node {$\times$};
			\draw (4+3,1) node {$\ast$};
			
			\draw[->,dashed] (6.75,4.75+0.5) -- (8.5+0.5,3.5+0.25);
			\draw (8+2,1+9/4) -- node[above right] {$E_1$} (8+3+2,1);
			\draw (8+2,1+9/4) node {$\circ$};
			\draw (8+3+2,1) node {$\ast$};
			
			\draw[->,dashed] (1,0.75-0.25) -- (1,-1.5);
			\draw[->,dashed] (5,0.75-0.25) -- (1.5,-1.5);
			\draw (1,-2) node {$\times$} node[below] {$V_1$};
			
			\draw[->,dashed] (1.5,0.75-0.25) -- (5,-1.5);
			\draw[->,dashed] (10.5+2,0.75-0.25) -- (6,-1.5);
			\draw (5.5,-2) node {$\circ$} node[below] {$V_3$};
			
			\draw[->,dashed] (11+2,0.75-0.25) -- (11+2,-1.5);
			\draw[->,dashed] (6,0.75-0.25) -- (10.5+2,-1.5);
			\draw (11+2,-2) node {$\ast$} node[below] {$V_2$};
		\end{tikzpicture}
	\caption{The partially ordered set of face containments in the triangle $\Delta$.}
	\end{figure}
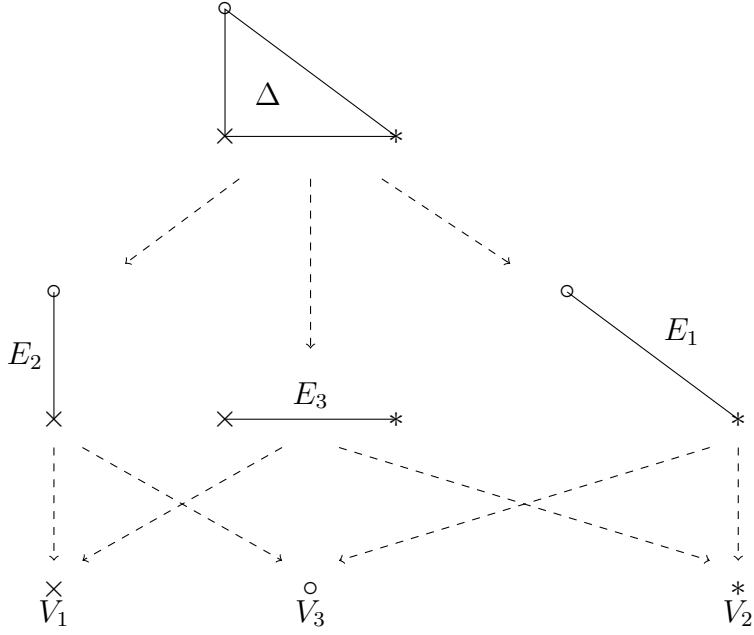

When iterating Stokes' theorem, applied to the exponential function integrated over $P$, in \cite{DLR}, we are naturally led to work with chains in the face poset $\mathbf{G}_{\Delta}$.    

For us, all chains $\mathbf{T}$  in the poset $\mathbf{G}_{\Delta}$ will begin from the root of the poset, namely 
$\Delta$ itself.  We will sometimes use the following notation for these chains of faces.  
Suppose $Y,Z$ are the faces corresponding to the last two nodes of a chain $\mathbf{T}$, so we may write
$\mathbf{T} = (\Delta \to ... \to Y \to Z)$. 
	
	We define the admissible set $S(\mathbf{T})$ of $\mathbf{T}$ to be the set of all points of $\R^2$ that are orthogonal to $Z$ but not $Y$. Finally, we define the following weights associated to the chain $\mathbf{T}$:
	
	\bigskip
	\begin{itemize}
		\item[$\bullet$] The rational weight $\mathcal{R}_{\mathbf{T}}(\xi) = \mathcal{R}_{(\Delta \to ... \to Y \to Z)}(\xi)$ is defined to be the product of weights associated to all of the arcs in $\mathbf{T}$, times the (usual Hausdorff) volume of the last node of the chain $\mathbf{T}$.
		
		\bigskip
		\item[$\bullet$] The exponential weight $\mathcal{E}_{\mathbf{T}}(\xi) = \mathcal{E}_{(\Delta \to ... \to Y \to Z)}(\xi)$ is equal to the evaluation of $e^{-2\pi i\langle\xi,x\rangle}$ at any point $x$ on the affine space spanned by $Z$. Notice that the inner product $\langle\xi,x\rangle$ does not depend on the position of $x$ on that affine space.
		
		\bigskip
		\item[$\bullet$] The total weight $W_{\mathbf{T}}(\xi) = W_{(\Delta \to ... \to Y \to Z)}(\xi)$ assigned to any chain $\mathbf T$
		 is defined to be 
\begin{equation}
W_{\mathbf{T}}(\xi) := \mathcal{R}_{\mathbf{T}}(\xi) \mathcal{E}_{\mathbf{T}}(\xi) \mathbf{1}_{S(\mathbf{T})}(\xi),
\end{equation}		
where $\mathbf{1}_{S(\mathbf{T})}(\xi)$ is the indicator function of the admissible set $S(\mathbf{T})$ of $\mathbf{T}$.
	\end{itemize}
	
	\bigskip
	\noindent
	Therefore, by the Main Theorem of \cite{DLR}, the solid-angle sum of $\Delta$ is
		$$A_{\Delta}(t) = a_2(t) t^2 + a_1(t) t + a_0(t),$$
	for all nonzero values of $t$, where
	\begin{align}
		a_2(t) &= \vol(\Delta) = \frac{hk}{2}, \nonumber \\
		a_1(t) &= t \sum_{i=1}^3 \lim_{\eps\to 0^+} \sum_{\xi\in (E_i^{\perp} \cap \Z^2) \setminus {(0,0)} } W_{(\Delta \to E_i)}(t\xi) \Geh(\xi), \label{Eq:a_1}\\ 
		a_0(t) &= t^2 \sum_{\substack{i,j=1 \\ i \neq j}}^3 \lim_{\eps\to 0^+} \sum_{\xi\in \Z^2 \setminus E_i^{\perp} } W_{(\Delta \to E_i \to V_j)}(t\xi) \Geh(\xi). \label{Eq:a_0}
	\end{align}
	
Here, $\Ge(x)=\eps^{-1}e^{-\pi\|x\|^2/\eps}$ is the $2$-dimensional heat kernel, whose Fourier transform is 	
\begin{equation}
\Geh(\xi)=e^{-\eps\pi\|x\|^2}.
\end{equation}

For ease of reading, we will also use the notation $\mathcal{F} \{f\}$ for the Fourier transform, particularly when the function $f$ becomes too lengthy.
 We notice that $a_1(t)$ and $a_0(t)$ are the aggregations of certain limiting sums. The inner sums in (\ref{Eq:a_1}) and (\ref{Eq:a_0}) are taken over the corresponding admissible sets, thus we can omit the indicator functions in those weights. 
	
Our main tool is the Poisson summation formula, but we employ some other tricks in order to calculate the ensuing limits of infinite lattice sums that arise naturally from Poisson summation.  In fact,  as a global road-map, we first use Poisson summation applied to a smoothed version of the indicator function of $\mathbf P$, and we think of $\mathbf P$ as living in the ``spatial domain'', while the right hand side of Poisson summation allows us to compute infinite lattice sums in the ``frequency domain'';  this infinite integer lattice sum in the frequency domain then breaks up into a finite number of lattice sums, each corresponding to a facet of $\mathbf F \subset \mathbf P$, for which we then use Poisson summation in reverse, to recognize each lattice sum corresponding  to $\mathbf F$ as some familiar function back in the spatial domain. 

We will also deal with translations of functions, and for notational ease we define  $T_{x_0}(\xi) := \xi - x_0$ and recall the Translation Identity for Fourier transforms:
\[
\widehat{f \circ T_{x_0}}(\xi) = \hat{f}(\xi) e^{-2 \pi i \langle \xi, x_0 \rangle}.
\]

The following lemma enables us to evaluate our limiting infinite sums as finite sums, which will turn out to be either periodic Bernoulli polynomials or Dedekind-Rademacher sums.
	
	\begin{lem} \label{lem:Lem}
		If $f$ is a continuous function on the polytope $\mathcal{P}$ in $\mathbb{R}^d$ and is zero outside $\mathcal{P}$, then, for all $x \in \mathbb{R}^d$,
			\[ \lim_{\eps \to 0^+} (f \ast \Ge) (x) = f(x) \om_{\mathcal{P}}(x). \]
	\end{lem}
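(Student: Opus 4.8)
\emph{Proof strategy.} The plan is to exploit the fact that $\{\Ge\}_{\eps>0}$ is an approximate identity: each $\Ge$ is nonnegative, $\int_{\R^d}\Ge = 1$ by the normalization of the Gaussian, and for every fixed $\delta>0$ one has $\rho(\eps,\delta):=\int_{\|z\|\ge \delta}\Ge(z)\,dz \to 0$ as $\eps\to 0^+$. Since $f$ is bounded (it is continuous on the compact set $\mathcal{P}$) and compactly supported, the convolution $(f\ast\Ge)(x)=\int_{\R^d}f(y)\,\Ge(x-y)\,dy$ makes sense for every $x$, and the task is to identify its pointwise limit. I would treat all $x\in\R^d$ uniformly, the boundary points $x\in\partial\mathcal{P}$ being the only subtle case.

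First I would record the local polyhedral structure: for any $x\in\R^d$ there is a $\delta_0>0$ with $\mathcal{P}\cap B_{\delta_0}(x) = \mathcal{K}_x\cap B_{\delta_0}(x)$, where $\mathcal{K}_x$ denotes the tangent cone of $\mathcal{P}$ at $x$ (so $\mathcal{K}_x=\R^d$ when $x\in\interior(\mathcal{P})$, and $\mathcal{P}\cap B_{\delta_0}(x)=\emptyset$ when $x\notin\mathcal{P}$). By the very definition of $\om_{\mathcal{P}}(x)$ as the solid angle of that tangent cone, $\om_{\mathcal{P}}(x)$ equals the fraction of the unit sphere about $x$ lying in $\mathcal{K}_x$. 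Because $\Ge$ is radially symmetric, passing to polar coordinates factors $\int_{\mathcal{K}_x}\Ge(x-y)\,dy$ into the product of that spherical fraction with $\int_{\R^d}\Ge = 1$; hence $\int_{\mathcal{K}_x}\Ge(x-y)\,dy = \om_{\mathcal{P}}(x)$ for every $\eps$.

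Next, fix $\delta\le \delta_0$ and split $(f\ast\Ge)(x) = \int_{\mathcal{P}\cap B_\delta(x)}f(y)\Ge(x-y)\,dy + \int_{\mathcal{P}\setminus B_\delta(x)}f(y)\Ge(x-y)\,dy$. The second integral is bounded in absolute value by $\|f\|_\infty\,\rho(\eps,\delta)$. In the first integral I replace $\mathcal{P}\cap B_\delta(x)$ by $\mathcal{K}_x\cap B_\delta(x)$ (equal, since $\delta\le\delta_0$) and write $f(y)=f(x)+(f(y)-f(x))$: the $f(x)$-piece equals $f(x)\big(\om_{\mathcal{P}}(x) - \int_{\mathcal{K}_x\setminus B_\delta(x)}\Ge(x-y)\,dy\big)$, which differs from $f(x)\om_{\mathcal{P}}(x)$ by at most $|f(x)|\,\rho(\eps,\delta)$; the remaining piece is bounded by $\big(\sup_{y\in \mathcal{P}\cap B_\delta(x)}|f(y)-f(x)|\big)\int_{\R^d}\Ge =: \eta(\delta)$. (When $x\notin\mathcal{P}$ the first integral is simply $0$ and $f(x)=\om_{\mathcal{P}}(x)=0$, so this case is degenerate.) Collecting the estimates gives $|(f\ast\Ge)(x)-f(x)\om_{\mathcal{P}}(x)| \le (\|f\|_\infty+|f(x)|)\,\rho(\eps,\delta) + \eta(\delta)$; letting $\eps\to 0^+$ kills the $\rho$ term, and letting $\delta\to 0$ kills $\eta(\delta)$ by continuity of $f$ on $\mathcal{P}$ at $x$. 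This yields $\lim_{\eps\to 0^+}(f\ast\Ge)(x)=f(x)\om_{\mathcal{P}}(x)$.

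The only genuinely delicate point is the boundary case $x\in\partial\mathcal{P}$, and it rests on two observations: the local coincidence of $\mathcal{P}$ with its tangent cone (where polyhedrality enters, and which is exactly why defining $\om_{\mathcal{P}}(x)$ through the tangent cone is the right choice), and the fact that the radial symmetry of the Gaussian is precisely what converts the truncated cone integral $\int_{\mathcal{K}_x\cap B_\delta(x)}\Ge(x-y)\,dy$ into the solid-angle weight $\om_{\mathcal{P}}(x)$ independently of $\eps$. Everything else is the standard approximate-identity estimate, and it carries over verbatim to general dimension $d$.
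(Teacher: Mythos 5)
Your proof is correct and follows essentially the same approach as the paper: both arguments reduce the limit to $f(x)$ times the Gaussian integral over the tangent cone at $x$, which equals $\om_{\mathcal{P}}(x)$ by the radial symmetry of $\Ge$. The only difference is bookkeeping --- the paper rescales by $\sqrt{\eps}$ so that the translated polytope blows up to the tangent cone, whereas you stay at the original scale and use the near/far splitting together with the local identity $\mathcal{P}\cap B_{\delta}(x)=\mathcal{K}_x\cap B_{\delta}(x)$; your version makes the interchange of limit and integral more explicit than the paper's.
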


	\begin{proof}
		By direct computation, we have:
		\begin{align*}
			(f \ast \Ge) (x) &= \int_{\R^d} f(y) \Ge(x-y) dy \\
							&= \int_{\mathcal{P}} f(y) \Ge(y-x) dy \\
							&= \int_{T_{-x}(\mathcal{P})} f(u+x) \Ge(u) du \\
							&= \int_{\frac{1}{\sqrt{\eps}}T_{-x}(\mathcal{P})} f(x + v\sqrt{\eps}) \G_1(v) dv.
		\end{align*}
		where $T_{-x}(\mathcal{P})$ is the translation of $\mathcal{P}$ by the vector $-x$. Since the polytope $\mathcal{P}$ is closed and bounded, the function $f$ is uniformly continuous on $P$. Thus, when $\eps$ approaches $0$,  the convolution $(f \ast \Ge) (x)$ approaches the following limit:
\[ 
\lim_{\eps \to 0^+} (f \ast \Ge) (x) = f(x) \int_{K} \G_1(v)dv = f(x) \om_K(0) = f(x) \om_{\mathcal{P}}(x), 
\]
where $K$ is the tangent cone of $\mathcal{P}$ at the vertex $x$.
\end{proof}

We record here some easy facts concerning the Fourier transforms of some  Bernoulli polynomials that we will find useful. The Fourier transforms of the first and second Bernoulli polynomials, namely
$\hat{B}_1(\xi)$ and $\hat B_2(\xi)$, takes a particularly nice form when evaluated at integral frequencies:
\begin{equation*}
			\hat{B}_1(\xi) = \left\{
			\begin{array}{ll}
				\frac{1}{-2 \pi i \xi} & \textup{when } \xi \in \Z_{\neq 0}, \\
				0 & \textup{when } \xi = 0.
			\end{array}
			\right.
\end{equation*} 
\begin{equation*}
			\hat{B}_2(x) = \left\{
			\begin{array}{ll}
				\frac{1}{2 \pi^2 \xi^2} & \textup{when } \xi \in \Z_{\neq 0}, \\
				0 & \textup{when } \xi = 0.
			\end{array}
			\right.
\end{equation*}

\bigskip	
	\section{The quasi-coefficient $a_1(t)$}\label{Calc1}
	
We first recall the definition of $a_1(t)$:
	$$a_1(t) = t \sum_{i=1}^3 \lim_{\eps\to 0^+} \sum_{\xi\in (E_i^{\perp} \cap \Z^2) \setminus {(0,0)} } W_{(\Delta \to E_i)}(t\xi) \Geh(\xi).$$
	
	For each $i=1,2,3$, the limiting sum involving $E_i$ is the limit of a $1$-dimensional sum over $E_i^{\perp}$, excluding the origin. Notice that there are two primitive lattice vectors on the space $E_i^{\perp}$, and we let $v_i$ be the unique primitive lattice vector which is also an outward pointing normal vector to the face $E_i$.  Then, a lattice point $\xi$ on $E_i^{\perp}$ is an integer multiple of $v_i$, i.e. $\xi = \eta v_i$ for some integer $\eta$. We also need to pick an arbitrary point $\zeta_i$ on the face $E_i$. The rational and exponential weights of the chain $(\Delta \to E_i)$ read:
	
	\begin{align*}
		\mathcal{R}_{(\Delta \to E_i)}(t\xi) &= \frac{\vol(E_i)}{-2 \pi i}\frac{\langle t\xi,\frac{v_i}{\|v_i\|}\rangle}{\|t\xi\|^2} = t^{-1} \frac{\vol(E_i)}{-2 \pi i}\frac{\eta \|v_i\|}{\|\eta v_i\|^2} = t^{-1} \frac{\vol(E_i)}{\|v_i\|} \frac{1}{-2 \pi i \eta}, \\
		\mathcal{E}_{(\Delta \to E_i)}(t\xi) &= e^{-2 \pi i \eta (\langle v_i, \zeta_i \rangle t)}.\\
	\end{align*}
	
	For ease of notation, we put $p_i(t) = \langle v_i, \zeta_i \rangle t$ and obtain
		$$\mathcal{R}_{(\Delta \to E_i)}(t\xi) \mathcal{E}_{(\Delta \to E_i)}(t\xi) = t^{-1} \frac{\vol(E_i)}{\|v_i\|} \widehat{B_1 \circ T_{p_i(t)}}(\eta).$$
	
	Now we are in the position to compute the limiting sum involving $E_i$.
	
	\begin{align*}
	a_1(t)\quad &= \quad t \lim_{\eps\to 0^+} \sum_{\xi\in (E_i^{\perp} \cap \Z^2) \setminus {(0,0)} } W_{(\Delta \to E_i)}(t\xi) \Geh(\xi)\\
		&=\quad -\frac{\vol(E_i)}{\|v_i\|} \lim_{\eps\to 0^+} \sum_{\eta \in \Z_{\neq 0}} \widehat{(B_1 \circ T_{p_i(t)})}(\eta) \widehat{\G_{\eps\|v_i\|^2}}(\eta) \\
		&=\quad  -\frac{\vol(E_i)}{\|v_i\|} \lim_{\eps\to 0^+} \sum_{\eta \in \Z} \widehat{(B_1 \circ T_{p_i(t)})}(\eta) \widehat{\G_{\eps}}(\eta) \\
		&=\quad  -\frac{\vol(E_i)}{\|v_i\|} \lim_{\eps\to 0^+} \sum_{\eta \in \Z} \mathcal{F}\left\{(B_1 \circ T_{p_i(t)}) \ast \G_{\eps}\right\}(\eta) \\
		&=\quad  -\frac{\vol(E_i)}{\|v_i\|} \lim_{\eps\to 0^+} \sum_{n \in \Z} \big((B_1 \circ T_{p_i(t)}) \ast \G_{\eps}\big)(n) \\
		&=\quad  -\frac{\vol(E_i)}{\|v_i\|} \sum_{n \in \Z} (B_1 \circ T_{p_i(t)}) (n) \om_{[p_i(t),1+p_i(t)]}(n) \\
		&=\quad  -\frac{\vol(E_i)}{\|v_i\|} \bar{B}_1 (p_i(t)).
	\end{align*}
	
	The fourth equality follows from Poisson Summation Formula, while the fifth one is a result of Lemma \ref{lem:Lem}. The last equality can be easily derived by considering separately the cases when $p_i(t)$ is an integer or not. For the cases $i=2,3$, the sample points $\zeta_2,\zeta_3$ can be chosen to be the origin. Whence, $p_2(t),p_3(t)$ are identically zero and the limiting sums involving $E_2,E_3$ both vanish. When $i=1$, we have $v_1 = (k,h)$ and $\zeta_1 = (h,0)$, which implies $ p_i(t) = hkt $ and $\vol(E_1) = \|v_1\| = \sqrt{h^2 + k^2}$. Therefore, the quasi-coefficient $a_1(t)$ has the simple formula:
		$$a_1(t) = -\frac{\vol(E_1)}{\|v_1\|} \bar{B}_1(p_1(t)) = -\bar{B}_1(hkt).$$

\bigskip 
	
\section{The quasi-coefficient $a_0(t)$}\label{Calc2}
	With the appearance of complicated limits of  two-dimensional infinite lattice sums in the formula for $a_0(t)$, namely Equation (\ref{Eq:a_0}), it may be expected that the calculation of $a_0(t)$ is quite involved. Therefore, we split the computation into two parts. As we will see later, we can convert $a_0(t)$ into an aggregation of certain limiting sums. In this section, we only deal with the $1$-dimensional sums and other sums which vanish due to their intrinsic lattice symmetry. The next section will take care of the remaining unwieldy $2$-dimensional sums.
	
	To begin with, let us notice that the admissible set for each chain $(\Delta \to E_i \to V_j)$ only depends on $E_i$ and not on $V_j$. Moreover, the normal vectors $N_{E_i}(V_{j_1})$ and $N_{E_i}(V_{j_2})$, where $V_{j_1}$ and $V_{j_2}$ are two end-vertices of the edge $E_i$, are negatives of each other, which results in a nice relation between two rational weights 
	$$\mathcal{R}_{(\Delta \to E_i \to V_{j_1})}(\xi)=-\mathcal{R}_{(\Delta \to E_i \to V_{j_2})}(\xi).$$ 
	
	These observations suggest that we should combine the weights of the two chains $(\Delta \to E_i \to V_{j_1})$ and $(\Delta \to E_i \to V_{j_2})$. We make the following notations.
	
	\begin{align*}
		b_1(t) := t^2\lim_{\eps\to 0^+} \sum_{\xi\in\Z^2 \setminus E_1^{\perp}} (W_{(\Delta \to E_1 \to V_2)}(t\xi) + W_{(\Delta \to E_1 \to V_3)}(t\xi)) \Geh(\xi), \\
		b_2(t) := t^2\lim_{\eps\to 0^+} \sum_{\xi\in\Z^2 \setminus E_2^{\perp}} (W_{(\Delta \to E_2 \to V_1)}(t\xi) + W_{(\Delta \to E_2 \to V_3)}(t\xi)) \Geh(\xi), \\  
		b_3(t) := t^2\lim_{\eps\to 0^+} \sum_{\xi\in\Z^2 \setminus E_3^{\perp}} (W_{(\Delta \to E_3 \to V_1)}(t\xi) + W_{(\Delta \to E_3 \to V_2)}(t\xi)) \Geh(\xi).
	\end{align*}
	
	It turns out that $b_2(t)$ and $b_3(t)$ will vanish for all $t$ due to certain lattice symmetries in their summation domains. Let us consider $b_2(t)$ first and let $\xi = (\xi_1,\xi_2)$. Then,
	\begin{align*}
		\mathcal{R}_{(\Delta \to E_2 \to V_1)}(\xi) &= -\mathcal{R}_{(\Delta \to E_2 \to V_3)}(\xi) \\
			&= \frac{-1}{2 \pi i} \frac{\langle \proj_{\Delta}(\xi),N_{\Delta}(E_2) \rangle}{\|\proj_{\Delta}(\xi)\|^2} \frac{-1}{2 \pi i} \frac{\langle \proj_{E_2}(\xi),N_{E_2}(V_1) \rangle}{\|\proj_{E_2}(\xi)\|^2} \vol(V_1) \\	
			&= \frac{-1}{4\pi^2} \frac{\langle (\xi_1,\xi_2),(-1,0) \rangle}{\xi_1^2+\xi_2^2} \frac{\langle (0,\xi_2),(0,-1) \rangle}{\xi_2^2} = \frac{-1}{4\pi^2} \frac{\xi_1}{(\xi_1^2+\xi_2^2)\xi_2}, \\
		\mathcal{E}_{(\Delta \to E_2 \to V_1)}(\xi) &= e^{-2 \pi i \langle \xi, V_1 \rangle} = 1, \\
		\mathcal{E}_{(\Delta \to E_2 \to V_3)}(\xi) &= e^{-2 \pi i \langle \xi, V_3 \rangle} = e^{-2 \pi i k \xi_2},
	\end{align*}
	which implies
	\begin{align} \label{Eq:b_2}
		b_2(t)=\lim_{\eps\to 0^+} \sum_{\substack{\xi\in\Z^2 \\ \xi\notin E_2^{\perp}}} \mathcal{R}_{(\Delta \to E_2 \to V_1)}(\xi) \big( \mathcal{E}_{(\Delta \to E_2 \to V_1)}(t\xi) - \mathcal{E}_{(\Delta \to E_2 \to V_3)}(t\xi) \big) \Geh(\xi).
	\end{align}
	
	Suppose that $\xi_2$ is fixed. Then, as functions in $\xi_1$, $\mathcal{R}_{(\Delta \to E_2 \to V_1)}(\xi_1,\xi_2)$ is odd, while $\Geh(\xi)$ is even. Also, both $\mathcal{E}_{(\Delta \to E_2 \to V_1)}(t\xi)$ and $\mathcal{E}_{(\Delta \to E_2 \to V_3)}(t\xi)$ are constant in $\xi_1$. Finally, the summation domain $\Z^2 \setminus E_2^{\perp} = \{ (\xi_1,\xi_2) \in \Z^2 : \xi_2 \neq 0  \}$ is symmetric with respect to the line $\xi_1 = 0$ in the frequency plane. All these facts together imply that the $2$-dimensional sum in the above expression for $b_2(t)$ always vanishes for all nonzero $t$.
	
	Similarly, we now calculate the rational and exponential weights involved in the definition of $b_3(t)$.
	\begin{align*}
		\mathcal{R}_{(\Delta \to E_3 \to V_1)}(\xi) &= -\mathcal{R}_{(\Delta \to E_3 \to V_2)}(\xi) \\
			&= \frac{-1}{2 \pi i} \frac{\langle \proj_{\Delta}(\xi),N_{\Delta}(E_3) \rangle}{\|\proj_{\Delta}(\xi)\|^2} \frac{-1}{2 \pi i} \frac{\langle \proj_{E_3}(\xi),N_{E_3}(V_1) \rangle}{\|\proj_{E_3}(\xi)\|^2} \vol(V_2) \\	
			&= \frac{-1}{4\pi^2} \frac{\langle (\xi_1,\xi_2),(0,-1) \rangle}{\xi_1^2+\xi_2^2} \frac{\langle (\xi_1,0),(-1,0) \rangle}{\xi_1^2} = \frac{-1}{4\pi^2} \frac{\xi_2}{(\xi_1^2+\xi_2^2)\xi_1} \\
		\mathcal{E}_{(\Delta \to E_3 \to V_1)}(\xi) &= e^{-2 \pi i \langle \xi, V_1 \rangle} = 1, \\
		\mathcal{E}_{(\Delta \to E_3 \to V_2)}(\xi) &= e^{-2 \pi i \langle \xi, V_2 \rangle} = e^{-2 \pi i h \xi_1},
	\end{align*}
	which implies
	\begin{align} \label{Eq:b_3}
		b_3(t)=\lim_{\eps\to 0^+} \sum_{\substack{\xi\in\Z^2 \\ \xi\notin E_3^{\perp}}} \mathcal{R}_{(\Delta \to E_3 \to V_1)}(\xi) \big( \mathcal{E}_{(\Delta \to E_3 \to V_1)}(t\xi) - \mathcal{E}_{(\Delta \to E_3 \to V_2)}(t\xi) \big) \Geh(\xi). .
	\end{align}
	
	Again, notice that if we fix $\xi_1$ then $\mathcal{R}_{(\Delta \to E_3 \to V_1)}(\xi_1,\xi_2)$ is an odd function in $\xi_2$, while $\Geh(\xi)$ is an even one. Both $\mathcal{E}_{(\Delta \to E_3 \to V_1)}(t\xi)$ and $\mathcal{E}_{(\Delta \to E_3 \to V_2)}(t\xi)$ are independent of $\xi_2$. Finally, the summation domain $\Z^2 \setminus E_3^{\perp} = \{ (\xi_1,\xi_2) \in \Z^2 : \xi_1 \neq 0  \}$ is symmetric with respect to the line $\xi_2=0$ in the frequency plane. We therefore conclude that  $b_3(t)=0$ for all nonzero $t$ by using similar symmetry considerations as we did for $b_2(t)$ above. 
	
We now have $a_0(t) = b_1(t) + b_2(t) + b_3(t) = b_1(t)$.   We will further decompose $b_1(t)$ into 6 difficult limiting sums. But first, let us carry out the preliminary computation of $b_1(t)$.
	\begin{align*}
		\mathcal{R}_{(\Delta \to E_1 \to V_2)}(\xi) &= -\mathcal{R}_{(\Delta \to E_1 \to V_3)}(\xi) \\
			&= \frac{-1}{2 \pi i} \frac{\langle \proj_{\Delta}(\xi),N_{\Delta}(E_1) \rangle}{\|\proj_{\Delta}(\xi)\|^2} \frac{-1}{2 \pi i} \frac{\langle \proj_{E_1}(\xi),N_{E_1}(V_2) \rangle}{\|\proj_{E_1}(\xi)\|^2} \vol{V_2} \\
			&= \frac{-1}{4\pi^2} \frac{\langle \xi, N_{\Delta}(E_1) \rangle}{\|\xi\|^2} \frac{1}{\langle \xi,N_{E_1}(V_2) \rangle} \\
			&= \frac{-1}{4\pi^2} \frac{\langle (\xi_1,\xi_2),(\frac{h}{\sqrt{h^2+k^2}},\frac{k}{\sqrt{h^2+k^2}}) \rangle}{\xi_1^2+\xi_2^2} \frac{1}{\langle (\xi_1,\xi_2) , (\frac{h}{\sqrt{h^2+k^2}}, \frac{-k}{\sqrt{h^2+k^2}}) \rangle} \\
			&= \frac{-1}{4\pi^2} \frac{k\xi_1+h\xi_2}{(\xi_1^2+\xi_2^2)(h\xi_1-k\xi_2)}. \\
		\mathcal{E}_{(\Delta \to E_1 \to V_2)}(\xi) &= e^{-2 \pi i \langle \xi, V_2 \rangle} = e^{-2 \pi i h \xi_1}. \\
		\mathcal{E}_{(\Delta \to E_1 \to V_3)}(\xi) &= e^{-2 \pi i \langle \xi, V_3 \rangle} = e^{-2 \pi i k \xi_2}.
	\end{align*}
	Thus,
	\begin{align}
		a_0(t) &= b_1(t) = \lim_{\eps\to 0^+} \sum_{\substack{\xi\in\Z^2 \\ \xi\notin E_1^{\perp}}} \mathcal{R}_{(\Delta \to E_1 \to V_2)}(\xi) \big( \mathcal{E}_{(\Delta \to E_1 \to V_2)}(t\xi) - \mathcal{E}_{(\Delta \to E_1 \to V_3)}(t\xi) \big) \Geh(\xi) \nonumber\\
				&= \lim_{\eps\to 0^+} \sum_{\substack{\xi_1,\xi_2\in\Z \\ h\xi_1 \neq k\xi_2}} \frac{-1}{4\pi^2} \frac{k\xi_1+h\xi_2}{(\xi_1^2+\xi_2^2)(h\xi_1-k\xi_2)} (e^{-2 \pi i h \xi_1} - e^{-2 \pi i k \xi_2}) \Geh(\xi_1,\xi_2). \label{Eq:b_1}
	\end{align}
	Inspired by the computation of $b_2(t)$ and $b_3(t)$, we hope to decompose the lengthy formula of $b_1(t)$ into components with similar patterns. First, we break up the rational function in (\ref{Eq:b_1}) into partial fractions:
	\begin{align}
		\frac{(k\xi_1 + h\xi_2)}{(\xi_1^2+\xi_2^2)(h\xi_1 - k\xi_2)} &= \frac{k}{(h\xi_1 - k\xi_2)\xi_1} + \frac{\xi_2}{(\xi_1^2+\xi_2^2)\xi_1} \\
			&= \frac{h}{(h\xi_1 - k\xi_2)\xi_2} - \frac{\xi_1}{(\xi_1^2+\xi_2^2)\xi_2}.
	\end{align}
	Now the rational weights involved in $b_2(t)$ and $b_3(t)$ appear again. However, the summation domain now is not (but almost) symmetric to either the line $\xi_1=0$ or the line $\xi_2=0$. Therefore, we need to work a bit to `symmetrize' the summation domain. We also need to take care of the cases when $\xi_1$ or $\xi_2$ are zero, as separate sums. The limiting sum expression for $a_0(t)$ in Equation (\ref{Eq:b_1}) is therefore broken up into a sum of the following $6$ functions:
\begin{equation}
a_0(t) := c_1(t) + c_2(t) + c_3(t) + c_4(t) + c_5(t) + c_6(t),
\end{equation}
where
\begin{align}
		c_1(t) &:= \lim_{\eps\to 0^+} \sum_{\substack{\xi_1,\xi_2\in\Z \\ \xi_1 = 0, h\xi_1 \neq k\xi_2}} \frac{-1}{4\pi^2} \frac{(k\xi_1 + h\xi_2)}{(\xi_1^2+\xi_2^2)(h\xi_1 - k\xi_2)}  e^{-2\pi i h \xi_1 t} \Geh(\xi_1,\xi_2), \\
		c_2(t) &:= \lim_{\eps\to 0^+} \sum_{\substack{\xi_1,\xi_2\in\Z \\ \xi_1 \neq 0, h\xi_1 \neq k\xi_2}} \frac{-1}{4\pi^2} \frac{\xi_2}{(\xi_1^2+\xi_2^2)\xi_1} e^{-2\pi i h \xi_1 t} \Geh(\xi_1,\xi_2), \\
		c_3(t) &:= \lim_{\eps\to 0^+} \sum_{\substack{\xi_1,\xi_2\in\Z \\ \xi_1 \neq 0, h\xi_1 \neq k\xi_2}} \frac{-1}{4\pi^2} \frac{k}{(h\xi_1 - k\xi_2)\xi_1} e^{-2\pi i h \xi_1 t} \Geh(\xi_1,\xi_2), \\
		c_4(t) &:= -\lim_{\eps\to 0^+} \sum_{\substack{\xi_1,\xi_2\in\Z \\ \xi_2 = 0, h\xi_1 \neq k\xi_2}} \frac{-1}{4\pi^2} \frac{(k\xi_1 + h\xi_2)}{(\xi_1^2+\xi_2^2)(h\xi_1 - k\xi_2)}  e^{-2\pi i k \xi_2 t} \Geh(\xi_1,\xi_2), \\
		c_5(t) &:= -\lim_{\eps\to 0^+} \sum_{\substack{\xi_1,\xi_2\in\Z \\ \xi_2 \neq 0, h\xi_1 \neq k\xi_2}} \frac{-1}{4\pi^2} \frac{-\xi_1}{(\xi_1^2+\xi_2^2)\xi_2} e^{-2\pi i k \xi_2 t} \Geh(\xi_1,\xi_2), \\
		c_6(t) &:= -\lim_{\eps\to 0^+} \sum_{\substack{\xi_1,\xi_2\in\Z \\ \xi_2 \neq 0, h\xi_1 \neq k\xi_2}} \frac{-1}{4\pi^2} \frac{h}{(h\xi_1 - k\xi_2)\xi_2} e^{-2\pi i k \xi_2 t} \Geh(\xi_1,\xi_2). 
\end{align}
	
	The functions $c_1(t)$ and $c_4(t)$ take care of the cases when at least one of $\xi_1$ and $\xi_2$ is zero. Therefore, after substituting $\xi_1=0$ or $\xi_2=0$, these functions become $1$-dimensional sums, which can be calculated by the machinery introduced in Section \ref{Breakdown} and utilized in the previous section.
	\begin{align*}
		c_1(t) &= \lim_{\eps\to 0^+} \sum_{\xi_2 \in \Z_{\neq 0}} \frac{-1}{4\pi^2} \frac{h}{-k\xi_2^2}\Geh(0,\xi_2) \\
			&= \frac{h}{2k} \lim_{\eps\to 0^+} \sum_{\eta \in \Z} \hat{B_2}(\eta) \Geh(\eta) =  \frac{h}{2k} \lim_{\eps\to 0^+} \sum_{\eta \in \Z} \widehat{(B_2 \ast \Ge)}(\eta)\\
			&= \frac{h}{2k} \lim_{\eps\to 0^+} \sum_{n \in \Z} (B_2 \ast \Ge)(n) = \frac{h}{2k} \sum_{n \in \Z} B_2(n) \om_{[0,1]}(n) \\
			&= \frac{h}{2k} \big( B_2(0)\frac{1}{2} + B_2(1)\frac{1}{2} \big) = \frac{h}{12k}.
	\end{align*}
	
	By symmetry, the computation of $c_4(t)$ can be carried out in a completely similar manner, giving us: 
\begin{equation}
c_4(t) = \frac{k}{12h}.
\end{equation}
	
We move next to the calculation of $c_2(t)$ and $c_5(t)$. The key step is to `symmetrize' the summation domains.
	\begin{align*}
		c_2(t) &= \lim_{\eps\to 0^+} \Bigg( \sum_{\substack{\xi_1,\xi_2\in\Z \\ \xi_1 \neq 0}} - \sum_{\substack{\xi_1,\xi_2\in\Z \\ \xi_1 \neq 0, h\xi_1 = k\xi_2}} \Bigg) \frac{-1}{4\pi^2} \frac{\xi_2}{(\xi_1^2+\xi_2^2)\xi_1} e^{-2\pi i h \xi_1 t} \Geh(\xi_1,\xi_2).
	\end{align*}
	Notice that summation domain $S=\{\xi_1,\xi_2\in\Z : \xi_1 \neq 0\}$ is symmetric about the line $\xi_2=0$. Hence, using the same argument as in the computation of the functions $b_2(t)$ and $b_3(t)$ in Section \ref{Calc1}, the sum over $S$ vanishes for all nonzero $t$ and all positive $\eps$. 
	
	The remaining sum now becomes a $1$-dimensional sum (lying in the $2$-dim'l plane) whose summation domain can be parametrized as 
		$$\{(\xi_1,\xi_2) \in\Z^2 : \xi_1 \neq 0 \textup{ and } h\xi_1 = k\xi_2\} 
		= \{  (\eta k, \eta h):  \ \eta\in\Z_{\neq 0}\}.$$
	Under this parametrization, the formula for $c_2(t)$ becomes:
	\begin{align*}
		c_2(t) &= -\lim_{\eps\to 0^+} \sum_{\eta\in\Z_{\neq 0}} \frac{-1}{4\pi^2}\frac{\eta h}{(\eta^2 h^2+\eta^2 k^2) \eta k} e^{-2 \pi i hkt \eta} \hat{\G}_{\eps(h^2+k^2)}(\eta) \\
			&= \frac{h}{2k(h^2+k^2)} \lim_{\eps\to 0^+}\sum_{\eta\in\Z} \widehat{(B_2 \circ T_{hkt})} \Geh(\eta) \\
			&= \frac{h}{2k(h^2+k^2)} \lim_{\eps\to 0^+}\sum_{\eta\in\Z} \mathcal{F}\left\{(B_2 \circ T_{hkt}) \ast \Ge\right\}(\eta)\\
			&= \frac{h}{2k(h^2+k^2)} \lim_{\eps\to 0^+}\sum_{n\in\Z} ((B_2 \circ T_{hkt}) \ast \Ge)(n) \\
			&= \frac{h}{2k(h^2+k^2)} \sum_{n\in\Z} (B_2 \circ T_{hkt})(n) \om_{[hkt,1+hkt]}(n)
	\end{align*}
	By a little consideration of the two separate cases when $hkt$ is an integer or not, it is easy to show that the last sum is equal to $\bar{B}_2(hkt)$. Recall that $\bar{B}_2(x)$ is the second periodic Bernoulli polynomial. Therefore, the explicit formula for $c_2(t)$ is:
		$$c_2(t) = \frac{h}{2k(h^2+k^2)} \bar{B}_2(hkt).$$
	Due to the symmetry in the parameters and indices, we can compute $c_5(t)$ using the same method that was used for $c_2(t)$ above, and we obtain the following formula:
		$$c_5(t) = \frac{k}{2h(h^2+k^2)} \bar{B}_2(hkt).$$
		
We remark that thus far all of our computations were made for the general case of $h,k \in \mathbb R$, and thus any real unimodular triangle, and we did not require the assumption of their integrality or even rationality.   In the next section, however, in order to simplify the computation of $c_3(t)$ and $c_6(t)$, we will restrict attention to the case $h,k \in \mathbb Z$.

\bigskip 

\section{Proof of the Main Theorem}\label{Calc3}

In order to complete the proof of the Main Theorem, we must evaluate the more complicated 
$2$-dimensional lattice sums $c_3(t)$ and $c_6(t)$.  Again by symmetry of the indices, we only need to compute
$c_3(t)$, and the formula for $c_6(t)$ will follow easily.  For the convenience of the reader we restate the definition of $c_3(t)$ here:
	
\begin{equation}
c_3(t) := \lim_{\eps\to 0^+} \sum_{\substack{\xi_1,\xi_2\in\Z \\ \xi_1 \neq 0, h\xi_1 \neq k\xi_2}} \frac{-1}{4\pi^2} \frac{k}{(h\xi_1 - k\xi_2)\xi_1} e^{-2\pi i h \xi_1 t} \Geh(\xi_1,\xi_2).
\end{equation}
	
	In order to tackle this rather delicate limit, we will show that the rational function in the summand is in fact the Fourier transform of a compactly supported function when the frequency $\xi=(\xi_1,\xi_2)$ is an integer point.  We remark that it is not possible to take the limit inside the lattice sum, because the ensuing sum will be formally divergent.

	Therefore, the plan is to follow the same method as in the previous sections, so that we may reduce the $2$-dimensional infinite sum to a finite sum over a parallelogram. When $ht$ is an integer, the vertices of the parallelogram are integer points and hence we need to take into account the solid angles at those points, which are closely related to the solid angles at the vertices of the triangle $\Delta$. In this case, the function $c_3(t)$ is the negative of the sum of the solid angle at $V_3$ of $\Delta$ and the Dedekind sums $s(h,k)$. In the other case when $ht$ is not in $\Z$, the vertices of the parallelogram will not be lattice points and the formula of $c_3(t)$ is just the nagative of the Dedekind-Rademacher sum $s(h,k;ht,0)$.

Here we define a two-dimensional analogue of the Bernoulli polynomial, which is compactly supported on the unit square $[0,1]^2$, and which we will need in order to analyze the quasi-coefficient $a_0(t)$.   First, we recall our definition of the real-valued one-dimensional Bernoulli polynomial, restricted to be compactly supported on the closed unit interval:
	 
	 \begin{equation*}
			B_1(x) = \left\{
			\begin{array}{ll}
				x - \frac{1}{2}       & \textup{when } x \in [0,1], \\
				0         & \textup{otherwise.}
			\end{array}
			\right.
		\end{equation*}

We define the following product of two such Bernoulli polynomials, which is therefore compactly supported on $[0,1]^2$:
	 \begin{equation*}
			\mathcal B(x,y) = \left\{
			\begin{array}{ll}
				B_1(x) B_1(y)      & \textup{when } (x,y)  \in [0,1]^2, \\
				0         & \textup{otherwise.}
			\end{array}
			\right.
		\end{equation*} 

First, the $1$-dimesional Fourier transform of $B_1(x)$ is retrieved easily by one application of integration by parts, and gives us:	

		\begin{align*}
			\hat{B}_1(n) &= \int_0^1 \left(x - \frac{1}{2}    \right) e^{2\pi i xn} dx \\
			&=\frac{1}{2} \frac{e^{2\pi i n} + 1}{2\pi i n} -  \frac{ e^{2\pi i n} - 1}{(2\pi i n)^2},
		\end{align*} 
	which is valid for all $n \in \mathbb R_{\not= 0}$.   When $n=0$, we have $\hat{B}_1(0)=0$.  Therefore, for integral frequencies $n \in \mathbb Z_{\neq 0}$, we get the particularly pleasing 
form $\hat{B}_1(n) = \frac{1}{2\pi i }  \frac{1}{n}$.   Similarly, we have:

		 \begin{align*}
		\hat{\mathcal B}(m,n) &=  \hat{B}_1(m) \hat{B}_2(n)\\
			&= \left(
			\frac{1}{2} \frac{e^{2\pi i m} + 1}{2\pi i m} -  \frac{ e^{2\pi i m} - 1}{(2\pi i m)^2}
			\right)
			\left(
			\frac{1}{2} \frac{e^{2\pi i n} + 1}{2\pi i n} -  \frac{ e^{2\pi i n} - 1}{(2\pi i n)^2}
			\right),
		\end{align*} 		
which again has a pleasing form when it is evaluated at integer vectors $(m,n)$.  Namely, we get:
\begin{equation}
\hat{\mathcal B}(m,n) =  \frac{1}{(2\pi i)^2}  \frac{1}{mn},
\end{equation}		
for all integer vectors $(m,n) \in \mathbb Z^2$.   
Next, we twist the Fourier transform by a linear transformation.  In other words, let  $M=\left(\begin{array}{ll} m & n \\ p & q \end{array}\right)$ be any matrix in $\textup{GL}(2,\Z)$, and let $M^{-T}$ be its inverse transpose.  We recall the standard
identity 
\[
\widehat{ f \circ M^{-T}}(\xi_1, \xi_2) =  |\det(M)| (\hat{f} \circ M) (\xi_1,\xi_2), 
\]
valid for all real vectors $(\xi_1, \xi_2) \in \mathbb R^2$.   Applying this identity to the function $f := \mathcal B$, we have arrived at the following result for integer vectors.

\begin{lem} \label{lem:Lem2}
For all integer vectors $(\xi_1, \xi_2) \in \mathbb Z^2$, we have
\begin{equation}		
\widehat{     \mathcal{B} \circ M^{-T}    }(\xi_1,\xi_2) =  \frac{|\det(M)|}{(2\pi i)^2} 
			\frac{1}{  (m\xi_1 + n\xi_2)(p\xi_1+q\xi_2)   }.
\end{equation}
\end{lem}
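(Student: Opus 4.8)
The plan is to read off Lemma \ref{lem:Lem2} as an immediate consequence of the two ingredients assembled just above its statement: the closed-form evaluation $\hat{\mathcal B}(m,n) = \frac{1}{(2\pi i)^2}\frac{1}{mn}$ at integer vectors, together with the linear-twist identity $\widehat{f \circ M^{-T}}(\xi_1,\xi_2) = |\det(M)|\,(\hat f \circ M)(\xi_1,\xi_2)$. First I would specialize the twist identity to $f := \mathcal B$ and to the given matrix $M \in \textup{GL}(2,\Z)$, which yields
\[
\widehat{\mathcal B \circ M^{-T}}(\xi_1,\xi_2) = |\det(M)|\,\hat{\mathcal B}\big(m\xi_1+n\xi_2,\ p\xi_1+q\xi_2\big),
\]
valid for all real $(\xi_1,\xi_2)$, since this display is nothing but the change-of-variables formula for the Fourier transform under an invertible linear map.

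Next comes the one point that genuinely needs attention: the compact rational form of $\hat{\mathcal B}$ is available only at \emph{integer} frequency vectors --- away from $\Z^2$, $\hat{\mathcal B}$ still carries the exponential terms coming from $\hat B_1$ --- so I must check that the argument $(m\xi_1+n\xi_2,\ p\xi_1+q\xi_2) = M(\xi_1,\xi_2)$ lands on the lattice. But $M$ has integer entries and $(\xi_1,\xi_2)\in\Z^2$ by hypothesis, so $M(\xi_1,\xi_2)\in\Z^2$, and we may legitimately substitute $\hat{\mathcal B}\big(m\xi_1+n\xi_2,\ p\xi_1+q\xi_2\big) = \frac{1}{(2\pi i)^2}\frac{1}{(m\xi_1+n\xi_2)(p\xi_1+q\xi_2)}$ into the previous display. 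This gives exactly the asserted formula.

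Finally I would dispose of the degenerate configurations: the right-hand side is literally defined only when neither linear form $m\xi_1+n\xi_2$ nor $p\xi_1+q\xi_2$ vanishes. If exactly one of them is zero, the matching one-dimensional factor is $\hat B_1(0) = \int_0^1 \big(x - \tfrac12\big)\,dx = 0$, so the left-hand side vanishes, and the identity is to be read --- consistently with the conventions recorded above for $\hat B_1$ and $\hat{\mathcal B}$ --- as the statement that both sides are $0$. Both forms vanish simultaneously only when $M(\xi_1,\xi_2) = (0,0)$, i.e., since $M$ is invertible, only at the origin, which is excluded in the lattice sums where the lemma is applied. There is no serious obstacle in this lemma: essentially all the content sits in the preceding computations of $\hat B_1$, $\hat{\mathcal B}$, and the twist identity, and the only things to keep track of are that the frequency vector must stay on the integer lattice, where $\hat{\mathcal B}$ collapses to its rational form, and that the vanishing of a single linear form is harmless.
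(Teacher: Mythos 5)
Your proof is correct and follows exactly the route the paper intends: the lemma is stated there as an immediate consequence of the rational form of $\hat{\mathcal B}$ at integer frequencies together with the twist identity $\widehat{f \circ M^{-T}} = |\det(M)|\,(\hat f \circ M)$, and the key observation that $M$ maps $\Z^2$ into $\Z^2$ is precisely what justifies the substitution. Your extra care about the degenerate case where a linear form vanishes (so that $\hat B_1(0)=0$ and both sides must be read as zero) is a point the paper leaves implicit, and it is handled correctly.
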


\noindent
We now use Lemma \ref{lem:Lem2} with the particular integral matrix $M = \left(\begin{array}{ll} h & -k \\ 1 & 0 \end{array}\right)$.   We further define
 $\mathcal{B}^*:=(\mathcal{B} \circ M^{-T} \circ T_{(ht,0)})$, a compactly supported function on the closed parallelogram 
 $H=(T_{(ht,0)}^{-1} \circ M^T)([0,1]^2)$, which has four vertices at $(ht,0)$, $(ht+1,0)$, $(ht+h,-k)$ and $(ht+h+1,-k)$. We have
	\begin{align*}
		c_3(t) &= \lim_{\eps\to 0^+} \sum_{(\xi_1,\xi_2)\in\Z^2} \mathcal{F}\left\{\mathcal{B} \circ M^{-T} \circ T_{(ht,0)} \right\} (\xi_1,\xi_2) \Geh(\xi_1,\xi_2) \\
			&= \lim_{\eps\to 0^+} \sum_{(\xi_1,\xi_2)\in\Z^2} \mathcal{F}\left\{(\mathcal{B} \circ M^{-T} \circ T_{(ht,0)}) \ast \Ge\right\}(\xi_1,\xi_2) \\
			&= \lim_{\eps\to 0^+} \sum_{(x_1,x_2)\in\Z^2} \left((\mathcal{B} \circ M^{-T} \circ T_{(ht,0)}) \ast \Ge\right)(x_1,x_2).
	\end{align*}

		Now, by Lemma \ref{lem:Lem}, the limiting sum in the latter formula of $c_3(t)$ is the finite sum over lattice points of the parallelogram $H$, each of whose summands is the product of the function $\mathcal{B}^*$ and the solid angle subtended by $H$ at each lattice point.  We rewrite the expression of $c_3(t)$ as follows:
	\begin{align}
		c_3(t) = \sum_{-k \leq x_2 \leq 0} \quad \sum_{ht-\frac{h}{k}x_2 \leq x_1 \leq ht-\frac{h}{k}x_2+1} \mathcal{B}^*(x_1,x_2) \om_H(x_1,x_2). \label{Eq:c_3}
	\end{align}
	
	When $-k < x_2 < 0$, there is either one or two lattice points in $H$ whose ordinates are $x_2$. If there are two lattice points, the solid angles subtended by $H$ at those points are both equal to $1/2$, but the valuations of $\mathcal{B}^*$ at those points are negatives of each other. Thus, the inner sum in (\ref{Eq:c_3}) vanishes in that case. In the other case that there is exactly one lattice point, that unique point has ordinate $\lfloor ht-\frac{h}{k}x_2 \rfloor$ and is also an interior point of $H$, which implies that the solid angle there is simply $1$. Therefore, after direct computation, the inner sum in (\ref{Eq:c_3}) becomes
		$$\bar{B}_1 \left(\frac{1}{k}x_2\right) \bar{B}_1 \left(ht-\frac{h}{k}x_2\right).$$
	Notice that this formula also agrees with the result in the previous case when there are two lattice points inside $H$ that have ordinate $x_2$. Hence, the sum of the former expression over all $-k < x_2 < 0$ is precisely the negative of the Dedekind-Rademacher sum $s(h,k;ht,0)$.
	
	The last two cases, namely $x_2=-k$ and $x_2=0$, are treated similarly by separating the cases when $ht$ is an integer or not. When $ht$ is not an integer, the evaluations of the inner sum of (\ref{Eq:c_3}) in these two cases are negatives of each other and thus cancel each other (one is $\bar{B}_1(ht)/4$, the other is $-\bar{B}_1(ht)/4$). When $ht$ is an integer, we need to count the solid angles at the vertices of the parallelogram $H$. It turns out that, in both of these cases, when $x_2=-k$ and when $x_2=0$, the inner sum of (\ref{Eq:c_3}) is equal to  $-\arctan(h/k)/(4\pi) \mathbf{1}_{\Z}(ht)$. Recall that the indicator function $\mathbf{1}_{\Z}(x)$ is equal to $1$ when $x$ is an integer and $0$ when $x$ is not an integer. Therefore, the use of the former indicator function helps unify the two sub-cases, as follows. We obtain the following concise formula for $c_3(t)$:
		$$c_3(t) = -s(h,k;ht,0)-2\frac{\arctan(h/k)}{4\pi} \mathbf{1}_{\Z}(ht) = -s(h,k;ht,0)-\frac{\arctan(h/k)}{2\pi} \mathbf{1}_{\Z}(ht).$$
		
	By symmetry, we are able to compute a similar formula for $c_6(t)$:
		$$c_6(t) = -s(k,h;kt,0)-\frac{\arctan(k/h)}{2\pi} \mathbf{1}_{\Z}(kt).$$
	
	Finally, we obtain the desired explicit formula for $a_0(t)$ which is the sum of all $c_i(t)$ for $i=1,...,6$:
	\begin{align*}
		a_0(t) &= \frac{h}{12k} + \frac{h}{2k(h^2+k^2)} \bar{B}_2(hkt) - s(h,k;ht,0)-\frac{\arctan(h/k)}{2\pi} \mathbf{1}_{\Z}(ht) \\
			&\quad + \frac{k}{12h} + \frac{k}{2h(h^2+k^2)} \bar{B}_2(hkt) - s(k,h;kt,0)-\frac{\arctan(k/h)}{2\pi} \mathbf{1}_{\Z}(kt) \\
			&= \frac{1}{2hk}\left(\bar{B}_2(hkt)+\frac{h^2+k^2}{6}\right) - s(h,k;ht,0) - s(k,h;kt,0) \\
			&\quad - \frac{\arctan(h/k)}{2\pi} \mathbf{1}_{\Z}(ht) - \frac{\arctan(k/h)}{2\pi} \mathbf{1}_{\Z}(kt).
	\end{align*}	
\noindent 
The proof of the Main Theorem is now complete.


\bigskip 
	
	\section{Retrieving some classical results from the main theorem}\label{Apps}
	From the previous sections, it is easy to see that the quasi-coefficients of $A_{\Delta}(t)$ are periodic in $t$ with period $1$, as proved in \cite{DLR}.  In the special case that 
	$t$ is an integer, our formula for the solid-angle sum $A_{\Delta}(t)$ simplifies:
	\begin{align*}
		A_{\Delta}(t) &= \frac{hk}{2}t^2 - 0 \cdot t + \frac{1}{2hk} \left(\frac{1}{6} + \frac{h^2+k^2}{6}\right) - s(h,k) - s(k,h) \\
					&\quad - \frac{\arctan(h/k)}{2\pi} - \frac{\arctan(k/h)}{2\pi} \\
					&= \frac{hk}{2}t^2 + \frac{1}{12} \left( \frac{h}{k} + \frac{1}{hk} + \frac{k}{h} \right) - \frac{1}{4} - s(h,k) - s(k,h),
	\end{align*}
for $t \in \Z_{>0}$.	Macdonald has shown that  for integer values of $t$, $A_{\Delta}(t)$ is exactly the volume of $t\Delta$, both $a_1(t)$ and $a_0(t)$ vanish, and that $A_{\Delta}(0) = 0$.   The statement that the last quasi-coefficient term vanishes, namely that $a_0(t) = 0$,  is therefore 
equivalent to the classical reciprocity law for the Dedekind sums.  Thus, we have just found another proof of the famous reciprocity law for the Dedekind sums:
\[
s(h,k) + s(k,h) =  \frac{1}{12} \left( \frac{h}{k} + \frac{1}{hk} + \frac{k}{h} \right) - \frac{1}{4}.
\]

This proof is similar to the approach taken in \cite{Matt}.  It is also obvious that the above formula satisfies the Generalized Macdonald's Reciprocity, introduced in \cite{DeR} and  \cite{DLR}.
	
	There is an intrinsic connection between the theory of Ehrhart sums and that of Macdonald's solid-angle sums. Recall that the Ehrhart sum $L_S(t)$ of a subset $S$ of $\R^2$ is the number of lattice points in the dilate $tS$.
		$$L_S(t):=\#\{\Z^2 \cap tS\}.$$
	Suppose $\mathcal{P}$ is a closed lattice polygon. The difference $L_P(t) - A_P(t)$ in the $2$-dimensional case is rather simple. It is just half the number of lattice points on the edges of $tP$ (every vertex is counted twice) minus the sum of the solid angles at each vertex of $tP$, whenever that vertex is a lattice point. In other words, if $\mathcal{P}$ is a polygon with $n$ vertices $V_1,...,V_n$ and $n$ closed edges $E_1,...,E_n$, then 
		$$L_{\mathcal{P}}(t) = A_{\mathcal{P}}(t) + \frac{1}{2} \sum_{i=1}^n L_{E_i}(t) - \sum_{i=1}^n \mathbf{1}_{\Z^2}(tV_i) \om_P(V_i).$$
	We will have no problem handling the last sum in the above expression. For the other sum, let us consider each closed edge separately. If the line $E_i$ contains the origin, all dilates of $E_i$ will lie on the same $1$-dimensional subspace. That subspace has a simple lattice structure generated by a primitive lattice vector. Therefore, $L_{E_i}(t)$  is easy to compute.     If the line $E_i$ does not contain the origin, we can employ the machinery introduced in Section $\ref{Breakdown}$ to break down the general case to the case when $E_i$ is the hypotenuse $E_1$ of the triangle $\Delta$. However, because the additive property of Ehrhart sums is  not as simple as that of the solid-angle sum, we must take care of the one-point intersections between the segments in the unimodular decomposition of $E_i$. This obstacle is also not difficult to overcome, and we now derive an exact formula for $L_{E_1}(t)$ of the hypotenuse $E_1$ of $\Delta$ based on a theorem of T. Popoviciu. 
	
	\begin{popo} If $a$ and $b$ are coprime positive integers, then, for every natural number $n$, the number of decomposition of $n$ into a linear combination of $a$ and $b$ whose coefficients are two positive integers is
	\begin{align*}
		p_{\{a,b\}}(n) &:= \# \left\{(x,y)\in\Z^2:x,y\geq 0,ax+by=n\right\}\\
						&=\frac{n}{ab} - \left\{\frac{b^{-1}n}{a}\right\} - \left\{\frac{a^{-1}n}{b}\right\} + 1,
	\end{align*}
	where $a^{-1}$ and $b^{-1}$ are two integers satisfying $a^{-1}a\equiv 1 \textup{ mod } b$ and $b^{-1}b\equiv 1 \textup{ mod } a$.
	\end{popo}
	
	Note that $(x,y)$ is a lattice point of the segment $tE_1$ if and only if $x$ and $y$ are two positive integers satisfying $\frac{x}{h} + \frac{y}{k} = t$, or equivalently 
	$kx + hy = hkt$.       Therefore, for any nonzero real number $t$, we have
		$$L_{E_1}(t) = \mathbf{1}_{\Z}(hkt) p_{\{h,k\}}(|hkt|).$$
	
	This result therefore implies that $L_{\mathcal{P}}(t)$ can be computed in polynomial time for any closed lattice polygon $\mathcal{P}$ and any nonzero real number $t$.
	
	In the specific case of the right-angled triangle $\Delta$, we have $L_{E_3}(t) = \lfloor ht \rfloor + 1$ and $L_{E_2}(t) = \lfloor kt \rfloor + 1$. By handling carefully the cases when $ht$ or $kt$ is an integer, and putting together all of the above considerations in this section, we obtain the following result, which is an extension of the Ehrhart quasi-polynomial of $\Delta$ to all real dilation parameters.  This is the main result of this section.
	
\begin{cor}\label{Cor}  For all real positive values of $t$, we have
\begin{align}
		L_{\Delta}(t) &= A_{\Delta}(t) + \frac{1}{2} (L_{E_1}(t) + L_{E_2}(t) + L_{E_3}(t)) \\    
					&\qquad - \frac{1}{4} - \mathbf{1}_{\Z}(ht) \frac{\arctan(k/h)}{2\pi} - \mathbf{1}_{\Z}(kt) \frac{\arctan(h/k)}{2\pi} \\    \label{lastformula}
					&= \frac{1}{2hk} \lfloor hkt \rfloor (\lfloor hkt \rfloor + 1) + \frac{1}{2} (\lfloor ht \rfloor + \lfloor kt \rfloor) + \frac{3}{4} + \frac{1}{12} \left( \frac{h}{k} + \frac{1}{hk} + \frac{k}{h} \right)\\
					&\qquad - s^*(h,k;ht,0) - s^*(k,h;kt,0) - \frac{1}{2}(\{ht\}+\{kt\}),
\end{align}
where $s^*(h,k;y,x)$ is the modified Dedekind-Rademacher sum	
$$
s^*(h,k;y,x) := \sum_{r \textup{ mod } k} \bar{B}_1^*\left( h\frac{r+x}{k} + y \right) \bar{B}_1^*\left( \frac{r+x}{k} \right).
$$
Here $\bar{B}_1^*(x) = x - \lfloor x \rfloor - \frac{1}{2}$ is defined for all real $x$. Note that $\bar{B}_1^*(x)$ only differs from $\bar{B}_1(x)$ at the integer points. 
\end{cor}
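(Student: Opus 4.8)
The plan is to start from the general identity relating the Ehrhart sum to the solid-angle sum, namely
\[
L_{\mathcal{P}}(t) = A_{\mathcal{P}}(t) + \frac{1}{2} \sum_{i=1}^n L_{E_i}(t) - \sum_{i=1}^n \mathbf{1}_{\Z^2}(tV_i)\,\om_{\mathcal{P}}(V_i),
\]
and specialize it to $\mathcal{P} = \Delta$. The three vertices are $V_1=(0,0)$, $V_2=(h,0)$, $V_3=(0,k)$, with normalized angles $\om_\Delta(V_1) = 1/4$ (the right angle), $\om_\Delta(V_2) = \arctan(k/h)/(2\pi)$, and $\om_\Delta(V_3) = \arctan(h/k)/(2\pi)$. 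Since $V_1$ is the origin, $tV_1$ is always a lattice point, contributing $-1/4$ unconditionally; the point $tV_2 = (ht,0)$ is a lattice point exactly when $ht\in\Z$, and similarly for $tV_3$. This accounts for the second and third lines of the displayed formula \eqref{lastformula}, giving the first equality of the corollary.

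For the edge sums, I would treat the three edges separately as the text indicates. The edges $E_2$ and $E_3$ lie on the coordinate axes through the origin, so $tE_2$ and $tE_3$ are just dilated segments along primitive lattice directions, and a direct count gives $L_{E_2}(t) = \lfloor kt\rfloor + 1$ and $L_{E_3}(t) = \lfloor ht\rfloor + 1$ for $t>0$. The hypotenuse $E_1$ is the interesting one: a point $(x,y)$ lies on $tE_1$ iff $x,y\ge 0$ and $kx+hy = hkt$, which forces $hkt\in\Z$ and then counts the positive-or-zero solutions, so $L_{E_1}(t) = \mathbf{1}_{\Z}(hkt)\,p_{\{h,k\}}(hkt)$. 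Applying Popoviciu's Theorem to $p_{\{h,k\}}(hkt)$ when $hkt\in\Z$ turns this into an explicit expression involving $hkt/(hk)$ and two fractional parts $\{h^{-1}\cdot hkt / k\}$ and $\{k^{-1}\cdot hkt / h\}$; since $k^{-1}k\equiv 1$ mod $h$ one simplifies $\{k^{-1}(hkt)/h\} = \{kt\}$ when $ht \in \mathbb{Z}$ and analogously for the other term, but care is needed because $hkt\in\Z$ does not imply $ht$ or $kt\in\Z$ individually.

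Next I would substitute the Main Theorem's formula for $A_\Delta(t)$, together with the edge counts just obtained, into the first equality. The constant and $\arctan$ terms coming from the vertex corrections are designed precisely to cancel the $\mathbf{1}_{\Z}(ht)$ and $\mathbf{1}_{\Z}(kt)$ indicator terms appearing inside $A_\Delta(t)$ via $c_3(t)$ and $c_6(t)$ — note $\arctan(h/k) + \arctan(k/h) = \pi/2$, so when both $ht,kt\in\Z$ the leftover is $-1/4$, matching. Then $\bar{B}_1(hkt)$ combines with the $-\frac{1}{2}(\{ht\}+\{kt\})$-type pieces and the floor terms from $\bar{B}_2$ and the edge counts; the Dedekind–Rademacher sums $s(h,k;ht,0)$ and $s(k,h;kt,0)$ built from $\bar{B}_1$ must be converted into the modified sums $s^*$ built from $\bar{B}_1^*$, picking up correction terms supported exactly on those $r$ for which the argument $h(r+0)/k + ht$ or $(r+0)/k$ hits an integer, i.e. when $ht\in\Z$ — these corrections are what shift $-s$ to $-s^*$ and simultaneously absorb the remaining $\arctan/\mathbf{1}_{\Z}$ discrepancies.

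I expect the main obstacle to be the careful bookkeeping of the boundary and integrality cases: one must track, as $t$ ranges over all positive reals, exactly which of $ht$, $kt$, $hkt$ are integers (these three conditions are logically independent beyond the obvious implications), and verify that in every case the $\arctan$ terms, the $\mathbf{1}_{\Z}$ indicator terms, the difference between $\om_\Delta$ at the non-right-angle vertices and the $\arctan/(2\pi)$ constants, and the $\bar{B}_1$-versus-$\bar{B}_1^*$ discrepancies in the Dedekind–Rademacher sums all cancel to yield the single clean formula \eqref{lastformula}. The text's own proof proceeds "by handling carefully the cases when $ht$ or $kt$ is an integer," which confirms that this case analysis — rather than any deep new idea — is the crux; everything else is substitution of the Main Theorem, the elementary axis-edge counts, and one invocation of Popoviciu's Theorem.
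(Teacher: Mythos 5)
Your proposal follows the paper's own route exactly: the general identity $L_{\mathcal P}(t)=A_{\mathcal P}(t)+\frac{1}{2}\sum_i L_{E_i}(t)-\sum_i\mathbf 1_{\Z^2}(tV_i)\,\om_{\mathcal P}(V_i)$, the elementary counts $L_{E_2}(t)=\lfloor kt\rfloor+1$ and $L_{E_3}(t)=\lfloor ht\rfloor+1$, Popoviciu's theorem for the hypotenuse, and then substitution of the Main Theorem together with a case analysis on the integrality of $ht$, $kt$, $hkt$ to convert $s$ into $s^*$ and absorb the $\arctan$ terms via $\arctan(h/k)+\arctan(k/h)=\pi/2$. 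The paper itself leaves that final bookkeeping at the level of ``handling carefully the cases when $ht$ or $kt$ is an integer,'' so your proposal matches both the approach and the level of detail of the original.
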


\bigskip
	
\section{ Additional remarks}  

As easily seen from the above formula \eqref{lastformula}, $L_{\Delta}(t)$ is a right-continuous function. Also, we can show that the difference between the right-limit and the left-limit at points $t=\frac{n}{hk}$, where $n$ is a positive integer, is exactly $p_{\{h,k\}}(n)$, which is the number of lattice points on the edge $tE_1$. 
	
We remark that a rather surprising fact about Corollary \ref{Cor} is that this formula is piecewise constant. This fact can be deduced from a result in \cite{Knu} by D. Knuth. In that article, Knuth uses the notation
		$$\sigma(h,k,c):=12\sum_{r \textup{ mod } k}\bar{B}_1\left(h\frac{r+c}{k} \right) \bar{B}_1 \left(\frac{r}{k} \right) = 12 s(h,k;\frac{c}{k},0),$$
	for any two relatively prime integers $h,k$ and any real number $c$. We may modify the above formula a little bit and define another function,
		$$\sigma^*(h,k,c):=12\sum_{r \textup{ mod } k} \bar{B}_1^*\left( h\frac{r+c}{k} \right) \bar{B}_1^*\left( \frac{r}{k} \right) = 12 s^*(h,k;\frac{c}{k},0).$$
	The relation between two functions $\sigma$ and $\sigma^*$ depends on whether or not $c$ is an integer. If $c$ is not an integer, then
		$$\sigma^*(h,k,c) = \sigma(h,k,c) - 6 \bar{B}_1^*\left(\frac{c}{k}\right),$$
	otherwise
		$$\sigma^*(h,k,c) = \sigma(h,k,c) - 6 \bar{B}_1^*\left(\frac{c}{k}\right) + 6 \bar{B}_1\left( \frac{h^{-1}c}{k}\right), $$
	where $h^{-1}$ denotes an integer satisfying $h^{-1}h \equiv 1 \textup{ mod } k$. For any integer $n$ and any real number $0<\nu<1$, Lemma 1 in \cite{Knu} asserts that
		$$\sigma(h,k,n+\nu) = \sigma(h,k,n) + 6 \bar{B}_1\left( \frac{h^{-1}n}{k}\right), $$
	Using the above relations between two functions $\sigma$ and $\sigma^*$, this result can be restated that
		$$\sigma^*(h,k,n+\nu) + 6 \bar{B}_1^*\left(\frac{n+\nu}{k}\right) = \sigma^*(h,k,n) + 6 \bar{B}_1^*\left(\frac{n}{k}\right),$$
	or equivalently,
		$$s^*(h,k;\frac{n+\nu}{k},0) + \frac{1}{2}\left\{ \frac{n+\nu}{k} \right\} = s^*(h,k;\frac{n}{k},0) + \frac{1}{2}\left\{ \frac{n}{k} \right\}.$$
	This relation asserts that the function $L_{\Delta}(t)$ is always constant on the interval $\left[\frac{n}{hk},\frac{n+1}{hk}\right)$. Although the fact $L_{\Delta}(t)$ is piecewise constant, for postive real numbers $t$, is rather obvious from its geometric definition, the above argument shows the formula in Corollary \ref{Cor} is piecewise constant on the whole range of real numbers.
	
	The concise formula of $L_{\Delta}(t)$ given in Corollary \ref{Cor}  can be verified in many specific cases. First, we consider $t$ to be an integer. Then, the value of $L_{\Delta}(t)$ is $\frac{1}{hk}t^2 + \frac{1}{2}(h+k+1)t + 1$, satisfying Pick's theorem. Secondly, we may consider the more general case where $t = \frac{n}{hk}$ for some positive integer $n$. In this case, Corollary \ref{Cor} reduces to
	\begin{align*}
		L_{\Delta}(t) &= \frac{n^2}{2hk} + \frac{n}{2}\left(\frac{1}{h} + \frac{1}{k} + \frac{1}{hk}\right) + \frac{1}{4} + \frac{1}{12} \left(\frac{h}{k} + \frac{k}{h} + \frac{1}{hk}\right)\\
			& - s^*(h,k;\frac{n}{k},0) - s^*(k,h;\frac{n}{h},0) - \bar{B}_1^*\left(\frac{n}{h}\right) - \bar{B}_1^*\left(\frac{n}{k}\right),
	\end{align*}
	which matches perfectly with Proposition 3.5 in Beck and Robins  \cite{BeR2}.
	
Although Corollary \ref{Cor} may also be proved by combining the work of Knuth \cite{Knu} with the work of Beck and Robins \cite{BeR2}, here we stress a more unified approach to deriving it and other more general results for real dilations.  
	
	Some of our methods, in particular the Poisson summation approach, arose in the paper \cite{DiR}.  A fascinating and completely different method for studying Ehrhart theory was initiated in \cite{Pom}, using Toric varieties and Todd classes.  Earlier, McMullen \cite{Mcm1, Mcm2} studied very interesting extensions of Ehrhart theory, using valuation theory, and although valuations do not yet seem to give formulas for the coefficients of the solid angle polynomial, they do provide very beautiful structural information about these polynomials.  Our solid angle polynomial $A_P(t)$ is an example of a {\it simple valuation} of polytopes in that context, which means that the valuation vanishes on lower-dimensional polytopes, making computations easier because we do not have to worry about the lower-dimensional intersections that do arise in Ehrhart polynomials of closed polytopes.  
	
We note that Hardy and Littlewood studied in \cite{HL1, HL2}  the number of integer points in integer dilations of the right triangle $\Delta$ which we call a simple pointed triangle. Their methods include the study of the Barnes zeta function, which is an interesting but different route.

\bigskip 

\section{Future directions}
The approach taken here can indeed be extended to higher dimensions, although it may be much more difficult to transform the infinite lattice sums (arising from Poisson summation and our discrete Stokes' formula) into  closed forms in the higher dimensional case.   We have used some partial fraction identities in section \ref{Calc2}  to transform the more challenging infinite lattice sums arising from Poisson summation into closed-forms, in terms of Bernoulli polynomials.   Does this method extend to higher dimensions?  In other words, is it possible to find a systematic partial fraction approach, which combines with our output from Poisson summation, to always give some closed-form expressions in terms of higher-dimensional analogues of Dedekind sums?

On the one hand, higher dimensional polytopes pose the additional difficulty that the number of combinatorial chains that come from the face poset of $\mathcal P$ increases exponentially with the dimension.  Indeed, even for a $d$-dimensional simplex, there are $(d+1)!$ chains in the face poset that we considered.  On the other hand, the combinatorial flavor of the face poset which enters the whole picture may play an interesting and non-trivial role in the future development of these methods.

Recently, the very interesting work of Cristofaro-Gardiner, Li, and Stanley \cite{CLS}, analyzes the Ehrhart sum for the simple pointed triangle  $\Delta$ whose hypotenuse is allowed to have irrational slope.  Their combinatorial methods  handle integer dilation parameters, but there is also strong hope that our methods here, together with further work, may extend their results to all real dilation parameters and to other triangles.  Indeed, most of our computations will remain unchanged when $h,k$ are allowed to be irrational real numbers,  except for the intricate computation
 of $c_3(t)$ and $c_6(t)$.   It will be interesting to see how much further one can go, with either the combinatorial techniques, or the analytic techniques, or both.


\begin{thebibliography}{100}

\bibitem{BaP}
A.\ Barvinok and J.\ Pommersheim,
An algorithmic theory of lattice points in polyhedra,
\textit{Math. Sci. Res. Inst. Publ. 38} (1999), Cam. Univ. Press, 91-147.

\bibitem{Matt}
M.\ Beck,
The reciprocity law for Dedekind sums via the constant Ehrhart coefficient,
\textit{American Mathematical Monthly 106, no. 5} (1971), 181-192.

\bibitem{BeR1}
M.\ Beck and S.\ Robins,
Computing the continuous discretely: integer-point enumeration in polyhedra, $2$'nd edition,
\textit{Springer, New York}, 2015.

\bibitem{BeR2}
M.\ Beck and S.\ Robins,
Explicit and efficient formulas for the lattice point count in rational polygons using Dedekind-Rademacher sums,
\textit{Discrete Comput. Geom. 27} (2002), 443-459.

\bibitem{Car}
L.\ Carlitz, The reciprocity theorem for Dedekind-Rademacher sums,
\textit{Acta Math. XXIX} (1976), 309-313.

\bibitem{CLS}
D.\ Cristofaro-Gardiner, T.-X.\ Li, and R.\ Stanley,
New examples of period collapse, (2015).
\text{arXiv:1509.01887v1}

\bibitem{DiR}
R.\ Diaz and S.\ Robins,
The Ehrhart polynomial of a lattice polytope,
\textit{Ann. of Math. 145} (1997), 503-518.

\bibitem{DeR}
D.\ DeSario and S.\ Robins, 
Solid angle theory for real polytopes, \textit{The Quarterly Journal of Mathematics}, Vol. 62, 4, (2011), 1003-1015.  


\bibitem{DLR}
R.\ Diaz,\ Q-N.\ Le and S.\ Robins,
A discrete Stokes formula for polytopes,
\textit{preprint}.

\bibitem{Ehr}
E.\ Ehrhart,
 Sur un probl\`{e}me de g\'{e}om\'{e}trie diophantienne lin\'{e}aire II,
\textit{J. reine. angew. Math. 227} (1967), 25-49.


\bibitem{HL1}  G. H. Hardy and J. E.  Littlewood,   \textit{Some problems of Diophantine approximation: The lattice-points of a right-angled triangle. (Second memoir)},  Abh. Math. Sem. Univ. Hamburg 1 (1922), no. 1, 211-248.

\bibitem{HL2} G. H. Hardy and J. E.  Littlewood,  \textit{Some Problems of Diophantine Approximation: The Lattice-Points of a Right-Angled Triangle}, Proc. London Math. Soc. S2-20, no. 1, (1922), 15-36.  

\bibitem{Knu}
D.\ Knuth, 
Notes on generalized Dedekind sums,
\textit{Acta Arith.} (1977), 297-325.

\bibitem{Mac1}
I.\ G.\ Macdonald,
The volume of a lattice polyhedron,
\textit{Proc. Cambridge Philos. Soc., 59} (1963), 719-726.

\bibitem{Mac2}
I.\ G.\ Macdonald,
Polynomials associated with finite cell-complexes,
\textit{J. London Mat. Soc 2} (1971), 181-192.

\bibitem{Mcm1}
P.\ McMullen,
Lattice invariant valuations on rational polytopes,
\textit{Arch. Math., 31} (1978), 509-516.

\bibitem{Mcm2}
P.\ McMullen, 
Non-linear angle-sum relations for polyhedral cones and polytopes, 
\textit{Math. Proc. Cambridge Phil. Soc., 78} (1975), 247-261.

\bibitem{Pom}
J.\ Pommersheim, 
\ Toric varieties, lattice points, and Dedekind sums, 
\textit{Math. Ann. 295} (1993), 1-24.





\end{thebibliography}
\end{document}